\documentclass[
	a4paper,
	leqno,
	11pt]{amsart} 
\calclayout
\usepackage{amsmath,amssymb,amsthm,mathtools}
\usepackage{mathrsfs}
\usepackage{graphicx}
\usepackage{xcolor}
\usepackage{booktabs}
\usepackage{enumitem}
\setlist[enumerate,1]{
	label={\upshape (\arabic*)},
	leftmargin=2.5em,
	labelsep=0.5em}
\setlist[enumerate,2]{
	leftmargin=*,
	labelsep=*,
	label=\alph*)}
\usepackage[psdextra]{hyperref}
\hypersetup{
    colorlinks=false,
    pdfborder={0 0 0},
}

\theoremstyle{plain}
\newtheorem{theorem}{Theorem}[section]

\newtheorem{lemma}[theorem]{Lemma}
\newtheorem{proposition}[theorem]{Proposition}

\numberwithin{theorem}{section}

\theoremstyle{definition}

\theoremstyle{remark}
\newtheorem{remark}[theorem]{Remark}

\author{Masanobu Kaneko} 
\address{Faculty of Mathematics, 
Kyushu University, 
Motooka 744, Nishi-ku,
Fuku\-oka 819-0395, 
Japan}
\email{mkaneko@math.kyushu-u.ac.jp}

\author{Masato Kuwata}
\address{Faculty of Economics, 
Chuo University, 
742-1 Higashinakano, 
Hachioji-shi, Tokyo 192-0393, 
Japan}
\email{kuwata.04f@g.chuo-u.ac.jp}

\subjclass[2020]{Primary 11F03, 11F01; Secondary 14H52, 11G30, 14H42, 11G05}
\keywords{Klein quartic, elliptic normal curve, modular function,
2-division point, theta function, modular curve of level 14}

\newcommand{\C}{\mathbf C}

\newcommand{\Q}{\mathbf Q}
\newcommand{\Z}{\mathbf Z}
\renewcommand{\P}{\mathbf P}

\newcommand{\F}{\mathbf F}
\newcommand{\G}{\Gamma}
\newcommand{\Gtwo}{\Gamma^{2}}
\newcommand{\A}{\mathcal A}

\renewcommand{\u}{\upsilon}
\newcommand{\E}{\mathcal E}

\def\ord{\operatorname{ord}}

\newcommand{\SL}{\operatorname{SL}}

\def\<{\langle}\def\>{\rangle}
\def\sltwo(#1,#2;#3,#4){\mathchoice%
{\left(\hskip -\arraycolsep%
\begin{array}{rr} #1 & #2 \\ #3 & #4 \end{array}%
\hskip -\arraycolsep\right)}
{\left(\begin{smallmatrix}#1 & #2 \\ #3 & #4 \end{smallmatrix}\right)}
{\left(\begin{smallmatrix}#1 & #2 \\ #3 & #4 \end{smallmatrix}\right)}
{\left(\begin{smallmatrix}#1 & #2 \\ #3 & #4 \end{smallmatrix}\right)}
}
\def\sltwoc(#1,#2;#3,#4){\mathchoice%
{\begin{pmatrix} #1 & #2 \\ #3 & #4 \end{pmatrix}}
{\left(\begin{smallmatrix}#1 & #2 \\ #3 & #4 \end{smallmatrix}\right)}
{\left(\begin{smallmatrix}#1 & #2 \\ #3 & #4 \end{smallmatrix}\right)}
{\left(\begin{smallmatrix}#1 & #2 \\ #3 & #4 \end{smallmatrix}\right)}
}

\makeatletter
\def\@fnsymbol#1{\ensuremath{\ifcase#1\or \dagger\or \ddagger\or
   \mathsection\or \mathparagraph\or \|\or \dagger\dagger
   \or \ddagger\ddagger \else\@ctrerr\fi}}\makeatother

\numberwithin{equation}{section}

\makeatother

\newcommand{\ifrac}[2]{\resizebox{!}{1.3\height}{$\frac{#1}{#2}$}}

\def\thin{{\hskip 1pt}}

\title[2-Division on the Klein--V\'elu Septic]
{The Klein--V\'elu Septic, 2-Division, \\
and Modular Function Fields of Level 14}

\date{\today}

\begin{document}

\begin{abstract}
We study the 2-division of the Klein--V\'elu elliptic normal septic. 
Its three non-zero 2-torsion points give rise to an explicit cubic equation over the function field of $X(7)$. 
We identify the three roots of this cubic with modular functions expressed in terms of septic theta functions at 
$\tau/2,\tau$, and $2\tau$, and show that its splitting field is precisely the function field of  $X(14)$. 
This gives an explicit link between the 2-division geometry of the Klein--V\'elu septic and the passage from full level 7 to full level $14$. 
Several intermediate modular function fields in the resulting $S_3$-extension are also described.
\end{abstract}

\maketitle

% ----------------------------------------------------------------
\section{Introduction}
% ----------------------------------------------------------------

For a positive integer $N$, an elliptic curve equipped with suitable
level-$N$ data may be embedded as an elliptic normal curve of degree $N$
in $\P^{N-1}$.
Such models were studied classically by Klein~\cite{Klein} and, in low degree, by Bianchi~\cite{Bianchi}.  V\'elu~\cite{Velu:1978} later treated projective embeddings of elliptic curves with level structure from an arithmetic point of view.

The case of degree $7$ is distinguished by its close connection with
Klein's quartic.  The universal elliptic curve with full level-$7$
structure admits an elliptic normal embedding in $\P^6$, with
coefficients $a_1,a_2,a_3$ satisfying
\begin{equation}\label{eq:Klein-a}
 a_1^3a_2-a_2^3a_3-a_3^3a_1=0.
\end{equation}
After a convenient choice of modular functions $\psi$ and $\u$,
this relation takes the form
\begin{equation}\label{eq:Klein-uv}
 \psi^7=\u^4(\u-1),
\end{equation}
and the field $\C(\psi,\u)$ is the modular function field
$\A_0(\G(7))$.

Explicit equations for modular curves of level $14$, as well as
generators for their function fields, can of course be obtained by
general methods; in particular, such descriptions are already available
in the literature.  The aim of the present paper is not to give another
independent construction of $X(14)$ merely for its own sake.  Rather,
we show that the passage from level $7$ to level $14$ is already encoded
in a very concrete way in the $2$-division geometry of the
Klein--V\'elu septic.  This point of view leads naturally to septic
theta functions and gives a uniform description of the relevant
intermediate modular function fields.

Indeed, the involution $[-1]$ on the degree-$7$ model acts by
\begin{equation}\label{eq:minus-one-intro}
 (x_0:x_1:x_2:x_3:x_4:x_5:x_6)
 \longmapsto
 (x_0:x_6:x_5:x_4:x_3:x_2:x_1).
\end{equation}
Consequently, its fixed locus, apart from the origin, consists of the
three non-zero points of order $2$ and is characterized by
\[
 x_1=x_6,\qquad x_2=x_5,\qquad x_3=x_4.
\]
Intersecting this linear subspace with the universal elliptic curve and
eliminating the projective coordinates leads to the cubic
\begin{equation}\label{eq:main-cubic-intro}
 P_\u(T)
 =
 T^3+T^2-\frac{\u^2-1}{\u}T
 +\frac{(\u-1)^2}{\u},
\end{equation}
where $\u=\u(\tau)$ is the modular function defined by
\(
\u(\tau)%=\phi_1(\tau)\phi_2(\tau)^3
 =a_2(\tau)^2a_3(\tau)/a_1(\tau)^3
\).
Define two modular functions $\phi_{1}$, $\phi_{2}$ by
\begin{equation}\label{eq:def-phi12}
 \phi_1(\tau)=-a_3(\tau)/a_2(\tau),
 \quad
 \phi_2(\tau)=-a_2(\tau)/a_1(\tau).
\end{equation}
Let $g_1,g_2,g_3$ denote the three roots of $P_\u(T)$.  Our first main result identifies these roots explicitly in terms of septic theta functions.

\begin{theorem}[Theta description of the $2$-division points]
\label{thm:intro-theta}
Define
\[\left\{ \ 
\begin{aligned}
 g_1(\tau)&=\phi_1(\tau)^2/\phi_1(2\tau),\\
 g_2(\tau)&=\phi_1(\tau)\phi_1(\tau/2)\phi_1(2\tau)
            -\phi_1(2\tau)/\phi_2(\tau),\\
 g_3(\tau)&=-\phi_1(\tau)\phi_1(\tau/2)\phi_1(2\tau)
            -\phi_1(\tau)^2\phi_2(\tau)/\phi_2(2\tau).
\end{aligned}
\right.
\]
Then the cubic equation \eqref{eq:main-cubic-intro} factors as follows:
\[
 P_{\u(\tau)}(T)
 =
 (T-g_1(\tau))(T-g_2(\tau))(T-g_3(\tau)).
\]
Equivalently, we have the relations
\begin{align}
 g_1+g_2+g_3&=-1,\label{eq:sym1}\\
 g_1g_2+g_2g_3+g_3g_1&=-\u + 1/\u,\label{eq:sym2}\\
 g_1g_2g_3&=-\u- 1/\u+2.\label{eq:sym3}
\end{align}
\end{theorem}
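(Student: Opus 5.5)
The plan is to prove the three symmetric identities \eqref{eq:sym1}--\eqref{eq:sym3} directly as identities between modular functions. The factorization of $P_{\u(\tau)}(T)$ then follows by comparing coefficients.

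\textbf{Setup.} Write $a_1,a_2,a_3$ as septic theta functions, i.e.\ as weight-$1/2$ theta series with characteristics $(2j-1)/14$, or equivalently as products $q^{c_j}\prod_n(1-q^n)\dots$ over residues mod $7$. With this choice, $\phi_1=-a_3/a_2$ and $\phi_2=-a_2/a_1$ become modular functions with explicit eta-quotient--type product expansions. Note that $\u=\phi_1\phi_2^3$, since $a_2^2a_3/a_1^3=(-a_3/a_2)(-a_2/a_1)^3$. Then $g_1,g_2,g_3$ are invariant under a common group containing $\Gamma(14)$: conjugate $\Gamma(7)$ by $\tau\mapsto 2\tau$ and $\tau\mapsto\tau/2$ and intersect, to get some $\Gamma'\supseteq\Gamma(28)$ or $\Gamma(14)$. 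Each of the three identities is then an equality of modular functions on a fixed group $\Gamma'$ of finite index.

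\textbf{Proof by Sturm bound.} After clearing denominators, each identity becomes an identity among holomorphic modular forms of some weight $k$ on $\Gamma'$. For example, multiply \eqref{eq:sym1} by $a_2(\tau)a_2(2\tau)a_1(\tau)a_2(\tau/2)\cdots$ as needed. It then suffices to compare $q$-expansions (in $q^{1/28}$, say) up to the Sturm bound $\frac{k}{12}[\SL_2(\Z):\pm\Gamma']$. The required expansions follow from the product formulas for $a_j$.

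\textbf{Cleaner alternative.} Two steps reduce the computation:
\begin{enumerate}
\item Show that $g_1$ is a root of $P_\u$. This is a single identity involving only $\tau$ and $2\tau$, and it reduces to a modular equation of level $2$ relating $\phi_1(\tau)$ and $\phi_1(2\tau)$ over $\A_0(\Gamma(7))$.
\item Use the transformation $\tau\mapsto\tau+1$ (or a suitable element of $\Gamma(7)$ acting on $\tau/2$), which fixes $\u$ but permutes the functions $\phi_1(\tau/2),\phi_1(2\tau)$. Checking that this permutes $g_1,g_2,g_3$ gives that all three are roots.
\end{enumerate}
Distinctness of the $g_i$ then yields the factorization. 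Distinctness can be read off from leading $q$-terms, since $g_1$, $g_2$, $g_3$ have different orders or leading coefficients at $\infty$.

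\textbf{Main obstacle.} The hard part is controlling the Galois action precisely: showing that some element of $\Gamma(7)$ (or its normalizer fixing $\u$) carries $g_1$ to $g_2$ and $g_3$ exactly, given the sign conventions and the multiplier system of the septic thetas. If that step resists, I would fall back entirely on the Sturm-bound verification of \eqref{eq:sym1}--\eqref{eq:sym3}. I would first bound orders of poles at the cusps of $X(\Gamma')$ using the product expansions, which reduces each identity to checking finitely many $q$-coefficients.
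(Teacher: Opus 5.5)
\textbf{Main route.} Your fallback is sound and is essentially the paper's method: clear denominators, then check each identity as an equality of holomorphic modular forms by the valence (Sturm) bound. The common group is exactly $\Gamma(14)$, since $\Gamma_0(2)\cap\Gamma^0(2)=\Gamma(2)$. Because the forms have half-integral weight and a multiplier, the bound should be applied to a suitable power, as the paper does.

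The paper organizes the verification more economically:
\begin{enumerate}
\item The relation $g_1+g_2+g_3=-1$ needs no coefficient check. The $\phi_1(\tau/2)$ terms cancel, and the remaining identity $\phi_1(\tau)^2/\phi_1(2\tau)-\phi_1(2\tau)/\phi_2(\tau)-\phi_1(\tau)^2\phi_2(\tau)/\phi_2(2\tau)=-1$ follows from the product formulas for $a_1,a_2,a_3$ and two specializations of the Weierstrass three-term theta identity.
\item $P_\upsilon(g_1)=0$ is checked by the valence formula. It involves only $\tau$ and $2\tau$, so it lives on $\Gamma_0(2)\cap\Gamma(7)$.
\item A single weight-$5/2$ identity on $\Gamma(14)$ is checked, giving $g_2g_3=-\phi_1(\tau)^3\phi_2(\tau)\phi_1(2\tau)$.
\end{enumerate}
The second and third symmetric relations then follow algebraically from the Klein relation and $P_\upsilon(g_1)=0$. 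Checking those two relations directly, as you propose, also works, but the forms have larger weight, so more coefficients must be compared.

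\textbf{Your ``cleaner alternative'' does not work as stated.}

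The map $\tau\mapsto\tau+1$ does not permute $\phi_1(\tau/2)$ and $\phi_1(2\tau)$. It multiplies the latter by $\zeta_7^{-2}$ and sends the former to $\phi_1((\tau+1)/2)$. Since $g_1$ has an integral $q$-expansion, it is fixed by this map, so nothing new comes from $P_\upsilon(g_1)=0$. The map does exchange $g_2$ and $g_3$, but only because of a genuine identity, not by formal substitution. With $A=\phi_1(\tau/2)\phi_1(\tau)\phi_1(2\tau)$, that identity is
$A(\tau)+A(\tau+1)=\phi_1(2\tau)/\phi_2(\tau)-\phi_1(\tau)^2\phi_2(\tau)/\phi_2(2\tau)$.

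The same problem arises with elements of $\Gamma(7)$. Take $\gamma\in\Gamma(7)$ that actually moves $g_1$, i.e.\ $\gamma\notin\Gamma_0(2)$. Write $2\gamma\tau=\delta\tau'$ with $\tau'=(\tau+j)/2$ and $\delta\in\Gamma_0(7)$ having diagonal $(2,4)$ modulo $7$. Such a $\delta$ acts monomially on $a_1,a_2,a_3$. Hence $\gamma$ sends $g_1$ to a monomial $\epsilon\,\phi_1(\tau)^2a_i(\tau')/a_k(\tau')$, with $\epsilon$ a root of unity. Identifying this monomial with the binomial formula for $g_2$ is a new theta identity, not a matter of signs and multipliers.

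So the Galois route still needs one valence-formula check involving $\tau/2$. It would replace the paper's $g_2g_3$ identity rather than remove it.
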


The formulas in Theorem~\ref{thm:intro-theta} are the analytic
counterpart of the geometric $2$-division construction.  In particular,
the occurrence of the three arguments $\tau/2$, $\tau$, and $2\tau$
is not incidental: the three functions describe the three non-zero
$2$-torsion points over the full level-$7$ moduli problem.

The second main result gives the modular interpretation of the splitting
field of \eqref{eq:main-cubic-intro}.  Abstractly, adding a full
level-$2$ structure to a full level-$7$ structure gives a full
level-$14$ structure, and
\[
 \G(7)/\G(14)\simeq\mathrm{SL}_2(\F_2)\simeq S_3.
\]
The point here is that this familiar moduli-theoretic fact is realized
by the explicit cubic \eqref{eq:main-cubic-intro} and the septic-theta
functions of Theorem~\ref{thm:intro-theta}.

\begin{theorem}[The splitting field and $X(14)$]
\label{thm:intro-X14}
The modular function field $\A_0(\G(14))$ is the splitting field of
\eqref{eq:main-cubic-intro} over $\A_0(\G(7))$.  More precisely,
\[
 \A_0(\G(14))
 =
 \C(\u,\psi,g_1,\delta)
 =
 \C(\u,\psi,g_1,g_2),
\]
where $\delta$ is a suitably normalized square root of the discriminant
of $P_\u(T)$.  Consequently an affine model of $X(14)$ is given by
\begin{equation}\label{eq:X14-intro}
 \left\{ \ 
 \begin{aligned}
  &Y^7=X^4(X-1),\\
  &XZ^3+XZ^2-(X^2-1)Z+(X-1)^2=0,\\
  &W^2=X(X-1)(X^3-8X^2+5X+1).
 \end{aligned}
 \right.
\end{equation}
Here $X=\u$, $Y=\psi$, $Z=g_1$ and $W=\delta$.
\end{theorem}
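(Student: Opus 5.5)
Write $K=\A_0(\G(7))=\C(\u,\psi)$ and $L$ for the splitting field of $P_\u(T)$ over $K$. The plan has three steps: show $L\subseteq\A_0(\G(14))$, show the degree $[L:K]$ is $6$, and compare with $[\A_0(\G(14)):K]$.

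\textbf{Step 1: $L\subseteq\A_0(\G(14))$.} By Theorem~\ref{thm:intro-theta}, the roots of $P_\u(T)$ are $g_1,g_2,g_3$, so $L=K(g_1,g_2,g_3)$. It therefore suffices to show that each $g_i$ is invariant under $\G(14)$. The functions $\phi_1,\phi_2$ are quotients of the $a_i$, which are septic theta functions, so they are modular for $\G(7)$. For $\gamma=\sltwo(a,b;c,d)\in\G(14)$ we have $c\equiv 0\pmod{14}$ and $b\equiv0\pmod{14}$. Hence:
\begin{itemize}
\item[(i)] $\tau\mapsto 2\tau$ conjugates $\gamma$ to $\sltwo(a,2b;c/2,d)\in\G(7)$;
\item[(ii)] $\tau\mapsto\tau/2$ conjugates $\gamma$ to $\sltwo(a,b/2;2c,d)\in\G(7)$.
\end{itemize}
So $\phi_j(2\tau)$ and $\phi_j(\tau/2)$ are $\G(14)$-invariant, and each $g_i$ lies in $\A_0(\G(14))$. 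Since $\G(14)$ is normal in $\SL_2(\Z)$, the field $\A_0(\G(14))$ is Galois over $K$ with group $\G(7)/\pm\G(14)$. As $-1\notin\G(7)$ modulo the relevant identification, this group is $\G(7)/\G(14)\simeq \SL_2(\F_2)\simeq S_3$, of order $6$.

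\textbf{Step 2: $[L:K]=6$.} First, $P_\u(T)$ is irreducible over $K$. Multiplying through by $\u$ gives $\u T^3+\u T^2-(\u^2-1)T+(\u-1)^2$. A root in $\C(\u)$ would, by Gauss's lemma in $\C[\u][T]$, be of the form $r/s$ with $s\mid\u$ and $r\mid(\u-1)^2$. There are only finitely many such candidates up to scalars, and they can be checked directly; I expect each to fail, for instance by specialising at $\u=-1$ or $\u=2$.

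We also need irreducibility over $K$ itself, not just over $\C(\u)$. Since $[K:\C(\u)]=7$ (from $\psi^7=\u^4(\u-1)$), which is coprime to $3$, irreducibility over $\C(\u)$ transfers to $K$. Similarly, $\delta\notin K$ because $[K:\C(\u)]$ is odd and the discriminant $\u(\u-1)(\u^3-8\u^2+5\u+1)$ (up to a square factor) is not a square in $\C(\u)$. So $\mathrm{Gal}(L/K)\simeq S_3$ and $[L:K]=6=[\A_0(\G(14)):K]$, which gives $L=\A_0(\G(14))$.

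\textbf{Step 3: generators and the affine model.} The discriminant computation produces the third equation of \eqref{eq:X14-intro}, after normalising $\delta$ by clearing powers of $\u$. Since $L=K(g_1,\delta)$, the model \eqref{eq:X14-intro} follows. Also $K(g_1,g_2)=L$: a cubic field with one root adjoined is not normal, so adjoining a second root gives the whole splitting field. The equation for $Z$ is $\u P_\u(Z)=0$ rewritten.

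\textbf{Expected obstacle.} The main difficulty is the bookkeeping around $\pm1$ and the precise statement that the Galois group is $\G(7)/\G(14)$ of order exactly $6$. I would handle it via the standard fact that $[\A_0(\G(14)):\A_0(\G(7))]=[\bar\G(7):\bar\G(14)]=6$, since $-1\notin\G(7)$.
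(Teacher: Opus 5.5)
Your proof is correct, but it takes a different route from the paper's. The paper argues moduli-theoretically. A root of $P_\upsilon$ corresponds to a choice of non-zero $2$-torsion point, so $K(g_1)$ is the fixed field of $\Gamma_0(2)\cap\Gamma(7)$. Since the stabilizer of a non-zero vector in $\mathrm{SL}_2(\F_2)\simeq S_3$ has trivial core, the normal closure of $K(g_1)/K$ is the fixed field of $\Gamma(14)$, and $\delta$ generates the fixed field of $A_3$.

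You instead start from the explicit theta expressions of Theorem~\ref{thm:intro-theta}. You check directly, by conjugating with $\tau\mapsto2\tau$ and $\tau\mapsto\tau/2$, that $g_1,g_2,g_3$ are $\Gamma(14)$-invariant. You then force $[L:K]=6$ by algebra alone:
\begin{itemize}
\item the rational root test over $\C[\upsilon]$;
\item transfer of irreducibility to $K$, since $\gcd(3,7)=1$;
\item the discriminant $4(\upsilon-1)^3(\upsilon^3-8\upsilon^2+5\upsilon+1)/\upsilon^3$, whose odd valuation at $\upsilon=1$ prevents it from becoming a square in the odd-degree extension $K/\C(\upsilon)$.
\end{itemize}

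Your version is more rigorous at exactly the two points the paper takes on faith. First, it does not need the identification of the $2$-division locus with $X(\Gamma_0(2)\cap\Gamma(7))$. Second, it actually proves that the Galois group of the cubic over $K$ is all of $S_3$; the paper's ``this is an extension of degree $3$, thus $g_1$ generates'' silently requires $g_1\notin K$. The price is that you depend on the theta identities of Theorem~\ref{thm:intro-theta}, and hence on the valence-formula computations. The paper's argument, by contrast, explains conceptually why level $14$ appears. Note also that your conjugation argument shows $g_1$ is invariant under $\Gamma_0(2)\cap\Gamma(7)$, so you recover $K(g_1)=\mathcal A_0(\Gamma_0(2)\cap\Gamma(7))$ as well.

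One small repair in Step~2. Specializing $\upsilon$ at a single value cannot rule out a candidate root $c\,r(\upsilon)$ while the scalar $c$ is free; compare coefficients in $\upsilon$ instead, after clearing denominators and any common factor $(\upsilon-1)^2$.
\begin{itemize}
\item For $T=c$, the $\upsilon^2$-coefficient forces $c=1$ and leaves the non-zero remainder $2$.
\item For $T=c(\upsilon-1)/\upsilon$, it again forces $c=1$ and leaves $\upsilon-1$.
\item The other four candidates already fail on the leading coefficient.
\end{itemize}
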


Theorem~\ref{thm:intro-X14} should therefore be viewed not as a claim
that a description of the modular curve $X(14)$ itself is new, but as an explicit realization of its function field as the splitting field of the
$2$-division cubic naturally attached to the Klein--V\'elu septic.
The subgroup lattice of
\[
 \mathrm{Gal}\bigl(\A_0(\G(14))/\A_0(\G(7))\bigr)\simeq S_3
\]
then produces, from the same functions $g_i$ and $\delta$, several
intermediate modular curves.  Among them are the elliptic curve
$X_1(14)$, the genus-$2$ curve attached to
$\Gtwo\cap\G_1(7)$, the genus-$4$ curve attached to
$\G(2)\cap\G_1(7)$, and the genus-$19$ curve attached to
$\G_0(2)\cap\G(7)$.

This paper is a sequel in spirit to our earlier work~\cite{KKlevel10} on Bianchi's
elliptic quintic and modular function fields of levels $5$ and $10$.
There the main theme was the explicit description of torsion points on
the Bianchi quintic and the modular function fields obtained from them.
The degree-$7$ case has a different geometric feature which we exploit
here: the base curve is the Klein quartic, and the three non-zero
$2$-torsion points on the universal elliptic normal septic give an
explicit cubic whose full splitting field is the level-$14$ function
field.  Thus the present paper is organized around the $2$-division
problem and its $S_3$-Galois structure, rather than around a list of
function fields.

We briefly indicate how the present results fit into the existing
literature.  Elliptic normal curves with level structure and the
degree-$7$ model have a classical history going back to Klein~\cite{Klein} and
V\'elu~\cite{Velu:1978}; septic theta functions and their relation with Klein's quartic
have likewise been studied extensively (see e.g.~Elkies~\cite{Elkies:Klein-quartic}, Lachaud \cite{Lachaud}).  On the other hand, explicit
equations and generators for $X(14)$ and related modular curves are
known from general constructions (see e.g.~Yang~\cite{Yifan}).  Our contribution is to connect
these two sides explicitly: the $2$-division problem on the
Klein--V\'elu septic produces the cubic \eqref{eq:main-cubic-intro};
its roots admit the septic-theta expressions of
Theorem~\ref{thm:intro-theta}; and its splitting field and intermediate
fields recover the level-$14$ modular function fields.  To the best of
our knowledge, this explicit connection has not appeared previously.

The paper is organized as follows.
\S\ref{sec:KV} recalls only the material on septic theta
coordinates needed later and presents the Klein--V\'elu model over
$X(7)$.  \S\ref{sec:Weierstrass} gives explicit plane and
Weierstrass models of the universal elliptic curve.
\S\ref{sec:2division} studies the fixed locus of $[-1]$ and
derives the cubic \eqref{eq:main-cubic-intro}.
\S\ref{sec:theta-cubic} proves its factorization in terms of
septic theta functions.
\S\ref{sec:level14} identifies its splitting field with
$\A_0(\G(14))$ and describes selected intermediate fields.
\S\ref{sec:level49} records, separately from the main argument,
a complementary relation with modular curves of levels $7$ and $49$.
The theta identities underlying the projective model are discussed in
Appendix~\ref{sec:appendix-theta}.

% ----------------------------------------------------------------
\section{The Klein--V\'elu septic over the Klein quartic}
\label{sec:KV}
% ----------------------------------------------------------------

\subsection{Notation}

For a congruence subgroup $G\subset \mathrm{SL}_2(\Z)$, we write
$\A_0(G)$ for its field of modular functions over $\C$.
We use the standard notation $\G(N)$, $\G_1(N)$ and $\G_0(N)$.
We shall also need the inverse image in $\mathrm{SL}_2(\Z)$ of the
order-$3$ subgroup of $\mathrm{SL}_2(\F_2)$:
\[
 \Gtwo=
 \left\{
 \gamma\in\mathrm{SL}_2(\Z):
 \gamma\bmod 2\in
 \left\{
 \sltwo(1,0;0,1),\sltwo(1,1;1,0),
 \sltwo(0,1;1,1)
 \right\}
 \right\}.
\]
Thus $\G(2)\subset\Gtwo$ with index $3$.

\subsection{Septic theta coordinates}

For real parameters $p,q$, let
\[
 \theta_{(p,q)}(z,\tau)
 =
 \sum_{n\in\Z}
 e^{\pi i(n+p)^2\tau+2\pi i(n+p)(z+q)}  
 \]
be the classical theta function with characteristics.
For $k\in \frac12\Z/7\Z$, define
\begin{equation}\label{eq:theta-k}
 \theta_k(z,\tau)
 =
 i^{-1}
 \theta_{\left(\frac12-\frac{k}{7},\,\frac72\right)}(7z,7\tau).
\end{equation}
We write $\theta_k(z)$ when $\tau$ is fixed.

The following facts are standard consequences of the theory of elliptic
normal curves and are recalled here only to fix notation.

\begin{proposition}[{\cite[Th.~5.10, Th.~6.4]{KKtheta}}]\label{prop:theta-embedding}
The map
\[
 \Theta_\tau:\C/(\Z+\tau\Z)\longrightarrow\P^6,
 \qquad
 z\longmapsto
 (\theta_0(z):\theta_1(z):\cdots:\theta_6(z))
\]
is an elliptic normal embedding of degree $7$.
Its image is cut out by fourteen quadrics.
If
\[
 S=\Theta_\tau(\tau/7),\qquad T=\Theta_\tau(1/7),
\]
then $S$ and $T$ are points of order $7$, and translation by them is
given respectively by cyclic permutation and diagonal multiplication
by powers of $\zeta_7=e^{2\pi i/7}$.
Moreover, $[-1]$ is given by \eqref{eq:minus-one-intro}.
%\end{proposition}

Set
\begin{equation}\label{eq:theta-k-def}
 a_i=\theta_i(0,\tau)\qquad (i=1,2,3).
\end{equation}
Then the universal elliptic curve is given by the fourteen quadrics
\begin{equation}\label{eq:KV-14quadrics}
 \left\{
 \begin{aligned}
 a_1a_2x_i^2&=a_2^2x_{i+3}x_{i-3}-a_3^2x_{i+2}x_{i-2},\\
 a_2a_3x_i^2&=a_1^2x_{i+3}x_{i-3}-a_3^2x_{i+1}x_{i-1},
 \end{aligned}
 \right.
 \qquad i\in\Z/7\Z,
\end{equation}
together with the Klein relation \eqref{eq:Klein-a}.
The origin and a pair of $7$-torsion points are
\begin{align*}
 O&=(0:a_1:a_2:a_3:-a_3:-a_2:-a_1),\\
 S&=(-a_1:0:a_1:a_2:a_3:-a_3:-a_2),\\
 T&=(0:a_1\zeta_7^{-1}:a_2\zeta_7^{-2}:a_3\zeta_7^{-3}:
       -a_3\zeta_7^{-4}:-a_2\zeta_7^{-5}:-a_1\zeta_7^{-6}).
\end{align*}
\end{proposition}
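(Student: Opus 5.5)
The plan is to build everything from the quasi-periodicity of the functions $\theta_k$. From the series defining $\theta_{(p,q)}$ one gets directly:
\begin{itemize}
\item $\theta_k(z+1/7)=\zeta_7^{\,c k}\,\theta_k(z)$ for a fixed exponent $c$ (coming from the characteristic $p=\tfrac12-\tfrac k7$);
\item $\theta_k(z+\tau/7)=e(z,\tau)\,\theta_{k+1}(z)$ with a common nowhere-vanishing factor $e(z,\tau)$, obtained by shifting the summation index;
\item $\theta_k(z+1)=\pm\theta_k(z)$ and $\theta_k(z+\tau)=(\text{common factor})\cdot\theta_k(z)$.
\end{itemize}
Hence the $\theta_k$ for $k\in\Z/7\Z$ are seven sections of one line bundle $L$ of degree $7$ on $E_\tau=\C/(\Z+\tau\Z)$. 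They are linearly independent, because they are eigenvectors of the translation $z\mapsto z+1/7$ with distinct characters. Since $h^0(L)=7$, they form a basis, so $\Theta_\tau$ is the complete linear system $|L|$. A line bundle of degree $\ge3$ is very ample on an elliptic curve, so $\Theta_\tau$ is an elliptic normal embedding of degree $7$. The two translation formulas then show that translation by $T=\Theta_\tau(1/7)$ is diagonal multiplication by powers of $\zeta_7$, and translation by $S=\Theta_\tau(\tau/7)$ is the cyclic shift. For $[-1]$, the substitution $n\mapsto -n-1$ in the series gives $\theta_{-k}(-z)=-\theta_k(z)$, up to a sign fixed by the $i^{-1}$ normalization, which is the permutation \eqref{eq:minus-one-intro}.

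The explicit points follow from these symmetries. At $z=0$ the identity $\theta_{-k}(0)=-\theta_k(0)$ forces $\theta_0(0)=0$ and $\theta_{7-i}(0)=-a_i$, which gives $O$. Applying the cyclic shift to $O$, with the appropriate scalar absorbed projectively, gives $S$. Applying the diagonal $\zeta_7$-action to $O$ gives $T$.

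For the quadrics I would use a dimension count together with Heisenberg symmetry:
\begin{itemize}
\item $\dim \mathrm{Sym}^2H^0(L)=28$, while $h^0(L^2)=14$, and the multiplication map is surjective for a normal curve. So exactly $14$ independent quadrics vanish on the image, and (citing Fisher/Hulek) they generate its ideal.
\item The space of quadrics is stable under the Heisenberg group generated by the $S$- and $T$-actions.
\item Decomposing under the diagonal $\zeta_7$-characters, the quadrics in the eigenspace indexed by $2i$ are spanned by $x_i^2$, $x_{i+1}x_{i-1}$, $x_{i+2}x_{i-2}$ and $x_{i+3}x_{i-3}$. That is $4$ monomials against $2$ relations, so each eigenspace contains a $2$-dimensional space of vanishing quadrics.
\item Cyclic symmetry reduces the problem to $i=0$. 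The two relations there can be written down from a Riemann-type product formula, expressing $\theta_a\theta_b$ as a combination of level-$14$ theta functions and comparing coefficients.
\end{itemize}

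The main obstacle is fixing the coefficients of these two relations exactly. I would first try evaluation at torsion points instead of the product formula. A quadric in a given eigenspace vanishes on $E_\tau$ if and only if it vanishes at enough points, and imposing vanishing at $O$ and at a few translates $O+S^j$ (whose coordinates are explicit in $a_1,a_2,a_3$) pins the coefficients down to those in \eqref{eq:KV-14quadrics}, up to scaling. If this linear algebra becomes delicate, I would fall back on citing the addition formula from \cite{KKtheta}.

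The Klein relation then comes for free. Substituting $O$ into the first family with $i=1$ gives
\[
a_1^3a_2 = a_2^3a_3 + a_3^3a_1,
\]
which is exactly \eqref{eq:Klein-a}.
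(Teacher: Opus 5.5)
Your proposal is correct. For the symmetry part it agrees with the paper, but for the fourteen quadrics it takes a genuinely different route.

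\textbf{Where you agree with the paper.} The paper quotes the embedding and the ``cut out by fourteen quadrics'' statement from \cite{KKtheta}. Its own supporting argument, Appendix~\ref{sec:appendix-theta}, uses the same translation and parity formulas you derive, which give $S$, $T$, $[-1]$ and the coordinates of $O,S,T$. Its Klein relation comes from putting $x=0$, exactly as in your last step.

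\textbf{Where you differ.} The paper obtains the quadrics as genuine theta identities. From Jacobi's four-term identity it derives the addition formula \eqref{eq:basic-addition}, then the families \eqref{eq:addition-I} and \eqref{eq:addition-II}, and sets $y=0$. You replace all theta identities by three ingredients: the dimension count $28-14$ with projective normality, the Heisenberg weight decomposition leaving a $2$-dimensional kernel in each weight, and linear algebra at $7$-torsion points.

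\textbf{Checks on your argument.} The last step works cleanly if you choose the points well. In weight $0$, take $O$ and $S^3=\Theta_\tau(3\tau/7)=(-a_3:-a_2:-a_1:0:a_1:a_2:a_3)$. On the coefficients of $(x_0^2,x_1x_6,x_2x_5,x_3x_4)$ they impose the conditions $(0,-a_1^2,-a_2^2,-a_3^2)$ and $(a_3^2,-a_2a_3,-a_1a_2,0)$. These are independent because $a_3\neq0$, and both listed quadrics satisfy them identically. Avoid $S$ or $S^2$ for this step: verifying the listed quadrics there already needs the Klein relation, whereas with $O,S^3$ Klein comes out afterwards. You should also say why each weight space of $H^0(L^2)$ is $2$-dimensional. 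Translation by $\tau/7$, with the squared automorphy factor, multiplies the $1/7$-translation eigenvalue by $\zeta_7^2$, so it permutes the seven weight spaces cyclically and each has dimension $14/7$.

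\textbf{What each route buys.} Yours reduces exactly the facts the paper leaves to the citation (that $\Theta_\tau$ is an embedding and that the fourteen quadrics cut out the image) to standard results on elliptic normal curves: very ampleness, projective normality, and generation of the ideal by quadrics. It needs almost no computation. The paper's route yields more: the addition formulas themselves, i.e.\ the group law in these coordinates. It also gives the quadrics as identities independent of that general theory.
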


\subsection{Different models of Klein's quartic}

Define modular functions $\psi$ and $\u$ as follows:
\begin{equation}\label{eq:uv-def}
 \psi=-a_2a_3/a_1^2,%-\frac{a_2a_3}{a_1^2},
 \qquad
 \u=a_2^2a_3/a_1^3.%\frac{a_2^2a_3}{a_1^3}.
\end{equation}
The Klein quartic \eqref{eq:Klein-a} is then equivalent to
\eqref{eq:Klein-uv}.  In particular,
\[
 \A_0(\G(7))=\C(\psi,\u),
 \quad
 \psi^7=\u^4(\u-1).
\]
Equivalently, in terms of
$\phi_1=-a_3/a_2$ and $\phi_2=-a_2/a_1$ introduced earlier, we have
\begin{equation}\label{eq:uv-phi}
 \psi=\phi_1\phi_2^2,\qquad
 \u=\phi_1\phi_2^3.
\end{equation}

With the change of variable~\eqref{eq:uv-def}, Klein--Velu's equations  \eqref{eq:KV-14quadrics} become
\begin{equation}\label{eq:KV-uv}
 \left\{ \ 
 \begin{aligned}
 &\psi\u^{3}x_i^{2}+\u^{4} x_{i+3}x_{i-3}
     -\psi^{6}x_{i+2}x_{i-2}=0,\\
 &\psi\u^2x_i^2+\u^2x_{i+3}x_{i-3}
     -\psi^4x_{i+1}x_{i-1}=0,
 \end{aligned}
 \right.
 \quad i\in\Z/7\Z.
\end{equation}

\begin{lemma}\label{lem:upsilon-hauptmodul}
The function $\u$ is a Hauptmodul for $\G_1(7)$; more precisely,
\[
 \A_0(\G_1(7))=\C(\u).
\]
With the present normalization, $\u$ is the parameter occurring
in the Kubert--Tate normal form for an elliptic curve with a
distinguished point of order $7$.
\end{lemma}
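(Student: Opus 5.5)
The plan is to deduce the lemma from the already-stated equality $\A_0(\G(7))=\C(\psi,\u)$ with $\psi^7=\u^4(\u-1)$, by a degree count. On the group side, $\pm\G_1(7)$ has index $24$ in $\mathrm{SL}_2(\Z)$ (the genus-$0$ curve $X_1(7)$ has $6$ cusps), and $\pm\G(7)$ has index $168$. Hence
\[
 [\A_0(\G(7)):\A_0(\G_1(7))]=168/24=7 .
\]
On the function side, the polynomial $Y^7-\u^4(\u-1)$ is irreducible over $\C(\u)$. This follows from Eisenstein at the place $\u=0$, because the valuation $4$ is prime to $7$. Therefore $[\C(\psi,\u):\C(\u)]=7$. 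Consequently, as soon as we know $\u\in\A_0(\G_1(7))$, we get a tower
\[
 \C(\u)\subseteq\A_0(\G_1(7))\subseteq\A_0(\G(7))=\C(\psi,\u)
\]
in which the outer extension and the right-hand step both have degree $7$. This forces $\A_0(\G_1(7))=\C(\u)$.

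The main step is therefore to show that $\u$ is $\G_1(7)$-invariant.

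1. Modulo $\pm1$, the group $\G_1(7)$ is generated by $\G(7)$ together with $\sltwo(1,1;0,1)$, since $\G_1(7)$ reduces mod $7$ to the unipotent matrices $\sltwo(1,b;0,1)$.
2. Invariance under $\G(7)$ is already contained in $\u\in\C(\psi,\u)=\A_0(\G(7))$.
3. It remains to check $\u(\tau+1)=\u(\tau)$. From \eqref{eq:theta-k}, $a_k$ is a $q$-series supported on exponents $\tfrac72(n+\tfrac12-\tfrac k7)^2$, times a fixed phase coming from the characteristic $q=\tfrac72$.
4. Hence $\tau\mapsto\tau+1$ multiplies $a_k$ by a root of unity $\varepsilon_k=\exp\bigl(\pi i\cdot\tfrac{(7-2k)^2}{28}+c\bigr)$. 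The part depending on $n$ is integral, as one checks using that $7(n+\tfrac12)^2$ has constant fractional part, and this is absorbed into the $k$-independent constant $c$.
5. Since $\u=a_2^2a_3/a_1^3$ is homogeneous of degree $0$, the constant $c$ cancels. It then suffices to verify
\[
 2\cdot\tfrac{9}{28}+\tfrac{1}{28}-3\cdot\tfrac{25}{28}=-2\in 2\Z,
\]
so $\varepsilon_2^2\varepsilon_3/\varepsilon_1^3=1$.

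The main obstacle I expect is bookkeeping of the phase conventions in \eqref{eq:theta-k}, namely the factor $i^{-1}$ and the characteristic $\tfrac72$. I would handle this by writing out the leading $q$-exponents of $a_1,a_2,a_3$ explicitly, which are $q^{25/56},q^{9/56},q^{1/56}$ up to constants. From these exponents one also reads off that $\u$ has integral $q$-expansion starting at $q^{(18+1-75)/56}=q^{-1}$. This confirms both the invariance under $\tau\mapsto\tau+1$ and that $\u$ has a simple pole at $\infty$.

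For the Kubert--Tate statement, I would compare $\u$ with the standard Hauptmodul $d$ of $X_1(7)$. In that normal form the curve is
\[
 y^2+(1+d-d^2)xy+(d^2-d^3)y=x^3+(d^2-d^3)x^2,
\]
whose singular fibres lie over $d=0,1,\infty$ and the roots of $d^3-8d^2+5d+1$. From $\psi^7=\u^4(\u-1)$ the ramification of $X(7)\to X_1(7)$ lies over $\u=0,1,\infty$. Since $\u$ and $d$ are both Hauptmoduln for $\G_1(7)$, they differ by a M\"obius transformation. Matching the cusps $0,1,\infty$ together with the ramification indices and the sign of the leading $q$-coefficient pins this transformation down to the identity, up to the choice among the cusps. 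The precise identification can then be confirmed later via the Weierstrass model of \S\ref{sec:Weierstrass}, where the same cubic $X^3-8X^2+5X+1$ appears in the discriminant.
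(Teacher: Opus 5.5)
Your proof that $\A_0(\G_1(7))=\C(\upsilon)$ is correct and is essentially the paper's argument. Both use invariance under $\tau\mapsto\tau+1$ from the $q$-exponents (your phase computation makes the paper's ``one easily sees'' explicit), the generation $\G_1(7)=\langle\G(7),T\rangle$, and the degree-$7$ squeeze. One correction: Eisenstein's criterion does not literally apply at $\upsilon=0$, because the constant term of $Y^7-\upsilon^4(\upsilon-1)$ has valuation $4$, not $1$. Use instead that an element of valuation prime to $7$ is not a $7$th power, which suffices for $Y^7-a$. Alternatively, note that $\psi^2/\upsilon$ generates the same extension and is a root of the Eisenstein polynomial $Y^7-\upsilon(\upsilon-1)^2$.

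The Kubert--Tate part, however, contains a step that fails.
\begin{itemize}
\item The branch points $\upsilon=0,1,\infty$ of $X(7)\to X_1(7)$ are the cusps of width $1$. For instance, $\upsilon=\infty$ is the cusp $i\infty$, where $1/\upsilon=q+\cdots$ is a uniformizer.
\item In the Tate normal form, $d=0,1,\infty$ carry $I_7$ fibres, so $j$ has a pole of order $7$ there. For the standard Hauptmodul $d$, attached to $(\C/(\Z+\tau\Z),1/7)$, these are therefore the width-$7$ cusps. The width-$1$ cusps are the roots of $d^3-8d^2+5d+1$.
\item Matching ramification thus forces the M\"obius map $\mu$ with $\upsilon=\mu(d)$ to send the roots of that cubic to $\{0,1,\infty\}$. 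So $\mu$ cannot be the identity.
\end{itemize}
You can also see this directly. In $t$, $c_4$ has degree $8$ and $\Delta$ has degree $17$, so $j\sim t^7$ as $t\to\infty$. Putting $t=\upsilon(\tau)\sim q^{-1}$ gives $j\sim q^{-7}$, which is the behaviour of $j(7\tau)$, not $j(\tau)$.

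The statement $t=\upsilon$ can only hold in a Fricke-twisted sense. The Tate curve at $t=\upsilon(\tau)$ is a $7$-isogenous curve, essentially $\C/(\Z+7\tau\Z)$ with marked point $\tau$, and its quotient by the marked point is $\C/(\Z+\tau\Z)$. This is exactly what Proposition~\ref{prop:Weierstrass} records: $W_2$ is the Tate curve modulo $\langle(0,0)\rangle$. In \eqref{eq:W2disc} the cubic occurs to the $7$th power, while $\upsilon$ and $\upsilon-1$ occur only to the first power. So the check via $\Delta(W_2)$ that you propose would expose the swap rather than confirm the identity.

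The paper itself merely asserts this normalization. A genuine proof would have to go through Proposition~\ref{prop:Weierstrass} and Remark~\ref{rem:E4E6}, showing that the $\langle(0,0)\rangle$-quotient of the Tate curve with $t=\upsilon$ has $j$-invariant $j(\tau)$. Matching cusps against the standard $d$ cannot establish it.
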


\begin{proof}
From the theta-product definition one easily sees that the Laurent series of $\u(\tau)$ has
integral powers of $q=e^{2\pi i\tau}$:
\[
 \u(\tau)
 =q^{-1}+3+4q+3q^2-5q^4-7q^5-2q^6
  +8q^7+16q^8+\cdots.
\]
In particular, $\u(\tau+1)=\u(\tau)$.  Since $\u$ is
already a modular function for $\G(7)$, and $\G_1(7)$ is generated by
$\G(7)$ and
\(
 T=\sltwo(1,1;0,1),
\)
it follows that $\u\in\A_0(\G_1(7))$.

On the other hand, since we have
\(
 [\G_1(7):\G(7)]=7,
\)
the relation
\(
 \psi^7=\u^4(\u-1)
\)
gives
\(
 [\C(\psi,\u):\C(\u)]=7
\).
Hence
\(
 \A_0(\G_1(7))=\C(\u)
\).

For comparison with the standard torsion parameter, Kubert's
parametrization~\cite{Kubert-Lang} for a point of order $7$ is the Tate normal form
\cite{Tate:1974}
\[
 y^2+(1+t-t^2)xy+(t^2-t^3)y
 =x^3+(t^2-t^3)x^2,
\]
with distinguished point $(0,0)$.  Thus, in the normalization used
here, the parameter $t$ is precisely $\u$; equivalently, the
universal curve over $X_1(7)$ is
\begin{equation}\label{eq:X17-univ-early}
\E(X_{1}(7)): \  y^2-(\u^2-\u-1)xy-\u^2(\u-1)y
 =x^3-\u^2(\u-1)x^2. \qedhere
\end{equation}
\end{proof}

% ----------------------------------------------------------------
\section{Explicit models of the universal elliptic curve}
\label{sec:Weierstrass}
% ----------------------------------------------------------------

The elliptic normal model is well adapted to the level~$7$ structure, whereas
other models are useful for arithmetic computations.

\begin{proposition}[Plane septic model]\label{prop:plane-septic}
The Klein--V\'elu curve \eqref{eq:KV-uv} is birational over
$\Q(\psi,\u)$ to the plane septic
\begin{multline}\label{eq:plane-septic}
C_7:\ 
\psi^3\bigl(\u(\u-1)^2x_0^7+x_1^7+
 \u(\u-1)^2x_2^7\bigr)
-\u^2(\u^2-3\u-3)(\u-1)^2
 x_0^3x_1x_2^3 \\
+\psi(\u-1)(3\u^3-4\u^2-3\u-1)
 x_0^2x_1^3x_2^2
-\psi^2(3\u^2-2\u-2)x_0x_1^5x_2=0.
\end{multline}
It has fourteen ordinary double points, and its normalization is of genus $1$.
The points corresponding to $O,S,T$ are
\[
 O=(0:\psi^3:-\psi\u),\quad
 S=(\psi^3:-\psi\u:-\u^2),\quad
 T=(0:\psi^3:-\psi\u\zeta_7).
\]
\end{proposition}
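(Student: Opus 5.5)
The plan is to exploit the $7$-torsion symmetry: $S$ acts on the coordinates by cyclic permutation, and since $\Theta_\tau$ is a degree-$7$ embedding, the hyperplane sections are divisors of degree $7$. The hyperplane $x_0=0$ cuts the curve in a divisor linearly equivalent to $7O$ minus a correction that we can read off from the theta zeros. Concretely, $\theta_0(z)$ vanishes exactly at the seven points $z\equiv j/7$ ($j\in\Z/7$) modulo the lattice, because $O$ and $T^j$ all have $x_0=0$. So we project from the line $x_0=0$ in a way that keeps the $\langle T\rangle$-symmetry visible.

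\textbf{Step 1: coordinates on the septic.} Since $T$ acts diagonally by $x_k\mapsto\zeta_7^{k}x_k$, the quotient by $\langle T\rangle$ is handled by monomials of weight $\equiv0\pmod 7$. I would choose plane coordinates $(X_0:X_1:X_2)$ built from $x_0,x_1,x_2$, or more naturally from suitable ratios, so that the map to $\P^2$ is a linear projection from a $\P^3$ (for instance $(x_0:x_1:x_2)$). The image then has degree $7$ and arithmetic genus $15$. To reach geometric genus $1$ it must have exactly $14$ nodes, coming from the $14$ pairs of points identified by the projection center.

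\textbf{Step 2: the equation.} The projection is birational onto its image, since a degree-$7$ map to a plane curve of degree $7$ has degree one by a degree count. Its image is cut out by a single septic, obtained by eliminating $x_3,\dots,x_6$ from the fourteen quadrics \eqref{eq:KV-uv}. I would do this elimination explicitly: use the second family with $i=1,2,\dots$ to express $x_3,x_4,\dots$ rationally in $x_0,x_1,x_2$. For example, the quadric with $x_{i+1}x_{i-1}$ for $i=1$ gives $x_2x_0$, and one uses the quadrics successively to solve for $x_6,x_5,x_4,x_3$. Substituting into a remaining quadric gives a polynomial relation, from which we strip off extraneous factors. Torsion-weight considerations under $T$ force the surviving monomials to have $x$-weight $k_1+2k_2\equiv$ const mod $7$. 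This explains the shape $x_0^7,x_1^7,x_2^7,x_0^3x_1x_2^3,x_0^2x_1^3x_2^2,x_0x_1^5x_2$ (weights all $\equiv0$). The coefficients are then fixed by plugging in the known points $O,S$ and their $T$-translates, or by direct symbolic elimination using $\psi^7=\u^4(\u-1)$.

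\textbf{Step 3: singularities and points.} Compute the singular locus via partial derivatives. The $\langle T\rangle$- and $[-1]$-symmetries reduce this to checking two orbits of nodes, giving $14$ ordinary double points. The genus formula $15-14=1$ then confirms birationality to a genus-$1$ curve. Finally, the images of $O,S,T$ follow by projecting the given coordinates: for instance, $O$ maps to $(0:a_1:a_2)$, which rescales to $(0:\psi^3:-\psi\u)$ via \eqref{eq:uv-def}. The main obstacle is the elimination in Step 2 and verifying that the nodes are ordinary. I would carry both out by computer algebra over $\Q(\psi,\u)$, reducing $\psi$-powers modulo $\psi^7=\u^4(\u-1)$.
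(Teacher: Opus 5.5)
\textbf{Verdict: genuine gap, though easily repaired.} Your outline is the paper's argument: eliminate four coordinates from \eqref{eq:KV-uv}, find the fourteen nodes by the Jacobian criterion, use $15-14=1$, and substitute the torsion points. Your additions are worthwhile, namely birationality via $\deg\pi\cdot\deg\pi(E)=7$ and the $\langle T\rangle$-weight explanation of the six monomials.

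\textbf{The main error: wrong projection.} The projection you commit to, $(x_0:x_1:x_2)$, does not produce the displayed curve, and the check of $O$ that would have revealed this is miscomputed. From \eqref{eq:uv-def}, $a_2/a_1=-\u/\psi$, so $(0:a_1:a_2)=(0:\psi^3:-\psi^2\u)$, not $(0:\psi^3:-\psi\u)$. Moreover this point is not on \eqref{eq:plane-septic}. On $x_0=0$ the septic forces $(x_2/x_1)^7=-1/(\u(\u-1)^2)$, whereas $(a_2/a_1)^7=-\u^3/(\u-1)$, and these agree only if $\psi^7=1$. Likewise $S$ projects to $(-1:0:1)$, not to $(\psi^3:-\psi\u:-\u^2)$.

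So eliminating $x_3,\dots,x_6$ gives a septic with the same six monomials but different coefficients. For example, the ratio of the $x_2^7$- and $x_1^7$-coefficients becomes $(\u-1)/\u^3$ instead of $\u(\u-1)^2$. No diagonal rescaling of $x_1,x_2$ repairs all the torsion points at once. The paper's one-line proof is worded the same way, but the displayed equation and points are those of the projection $\pi=(x_0:x_3:x_6)$, i.e.\ one eliminates $x_1,x_2,x_4,x_5$. Then:
\begin{itemize}
\item $O\mapsto(0:a_3:-a_1)=(0:\psi^3:-\psi\u)$, because $a_1/a_3=\u/\psi^2$;
\item $S^4\mapsto(a_3:-a_1:a_2)=(\psi^3:-\psi\u:-\u^2)$;
\item $T^2\mapsto(0:\psi^3:-\psi\u\zeta_7)$.
\end{itemize}

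\textbf{Your structural arguments survive the change.} Each $x_k$ vanishes exactly on the $\langle T\rangle$-orbit of $S^k$, so the centre $\{x_0=x_3=x_6=0\}$ misses $E$. Hence $\pi$ is a morphism with $\deg\pi\cdot\deg\pi(E)=7$, and $\pi(E)$ is not a line because $E$ spans $\P^6$; this is the precise form of your ``degree count''. Also, $T$ acts on $(x_0,x_3,x_6)$ with weights $(0,-3,-6)$, still an arithmetic progression, so the six monomials are still forced.

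\textbf{Interpolation cannot fix the coefficients.} A $\langle T\rangle$-invariant septic takes one condition per $\langle T\rangle$-orbit. So $O$, $S$ and their translates give only two conditions on six coefficients. Even all of $E[7]$ imposes at most four, leaving at least a pencil:
\begin{itemize}
\item the orbit of $S^3$, which maps to $(-1:0:1)$, forces the first and third $7$th-power coefficients to agree;
\item after that, the orbits of $S^j$ and $S^{6-j}$ impose the same condition.
\end{itemize}
You need either a non-torsion point, such as a $2$-torsion point from Theorem~\ref{thm:2torsion-coordinates}, or the symbolic elimination itself.

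\textbf{The symmetry is not $[-1]$.} $[-1]$ does not preserve the centre of projection, so it induces no symmetry of the plane curve. The involution that does is $x_k\mapsto x_{6-k}$ ($[-1]$ followed by a translation). It exchanges the first and third plane coordinates of \eqref{eq:plane-septic} and preserves each node orbit. The two $\langle T\rangle$-orbits of nodes are exchanged instead by $\sqrt{4\u+1}\mapsto-\sqrt{4\u+1}$.
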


\begin{proof}
The equation \eqref{eq:plane-septic} may be obtained easily by eliminating $x_{3},\dots,x_{6}$ using some of the equations of \eqref{eq:KV-uv}.  
Using the Jacobian criterion, we find fourteen double points as follows:
\[
(x_{0}:x_{1}:x_{2})=\bigl(\zeta_{7}^{k}:
\zeta_{7}^{4k}\psi^{3}(1\pm\sqrt{4\u+1})/(2\u^{2}):1\bigr),
\quad k=0,1,\dots,6.
\]
The genus is then computed as 
\[  g=\frac{(7-1)(7-2)}2-14=1. \]
The displayed coordinates of $O,S,T$ are obtained by substitution.
\end{proof}

\begin{proposition}[Weierstrass models]\label{prop:Weierstrass}
The curve \eqref{eq:KV-uv} is birational over $\Q(\psi,\u)$ to
a short Weierstrass model
\[
 W_1: \ Y^2=X^3+A(\u)X+B(\u),
\]
where
\begin{align*}
A={}&-27(\u^2-\u+1)
(\u^6+229\u^5+270\u^4-1695\u^3
 +1430\u^2-235\u+1),\\
B={}&54(\u^{12}-522\u^{11}-8955\u^{10}
+37950\u^9-70998\u^8+131562\u^7\\
&\qquad-253239\u^6+316290\u^5
-218058\u^4+80090\u^3-14631\u^2+510\u+1),
\end{align*}
and also to the model
\begin{multline}\label{eq:W2}
W_2: \ 
 y^2-(\u^2-\u-1)xy
 -\u^2(\u-1)y\\
=x^3-\u^2(\u-1)x^2-5\u(\u-1)(\u^2-\u+1)(\u^3+2\u^2-5\u+1)x
\\
-\u(\u-1)\bigl(\u^9+9\u^8-37\u^7
   +70\u^6 -132\u^5
   \\
   +211\u^4-182\u^3 +76\u^2-18\u+1\bigr).
\end{multline}
Its discriminant is
\begin{equation}\label{eq:W2disc}
 \Delta(W_2)
 =
 \u(\u-1)
 \bigl(\u^3-8\u^2+5\u+1\bigr)^7.
\end{equation}
\end{proposition}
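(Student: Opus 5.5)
The plan is to route everything through the plane septic of Proposition~\ref{prop:plane-septic}, which is already birational to \eqref{eq:KV-uv} over $\Q(\psi,\u)$ and carries the explicit rational point $O=(0:\psi^3:-\psi\u)$.

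\emph{Step 1: a cubic model.} We move $O$ to a flex of a plane cubic. Rather than blowing up the fourteen nodes by hand, we compute the Riemann--Roch space $L(3O)$ on the normalization of $C_7$. Concretely, we take adjoint curves: forms of degree $d$ passing through the fourteen nodes, with the prescribed vanishing at $O$. A basis $1,x,y$ of $L(3O)$ then gives a plane cubic over $\Q(\psi,\u)$.

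\emph{Step 2: descend the coefficients.} The Weierstrass coefficients obtained this way should in fact lie in $\Q(\u)$. The reason is that the isomorphism class of the fibre is invariant under $\G_1(7)$, and Lemma~\ref{lem:upsilon-hauptmodul} identifies $\A_0(\G_1(7))=\C(\u)$. Alternatively one can see this directly, since $\psi$ enters only through the scaling $x_i\mapsto\psi^{c_i}x_i$.

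\emph{Step 3: match the Tate normal form.} We normalize the cubic so that the image of $S$ or $T$ is sent to $(0,0)$. The curve \eqref{eq:W2} has exactly the same $a_1,a_2,a_3$ as the Tate normal form \eqref{eq:X17-univ-early}. This suggests $W_2$ is $\E(X_1(7))$ modified in $a_4,a_6$, namely the quotient by the $7$-torsion subgroup generated by $(0,0)$. That is natural: the Klein--V\'elu curve is the curve $\C/(\Z+\tau\Z)$, whose $\langle S\rangle$-quotient relates to $X_1(7)$. In practice the cleanest route is therefore the following. Run V\'elu's isogeny formulas on the Tate normal form \eqref{eq:X17-univ-early} with the kernel generated by $(0,0)$; this produces $W_2$ verbatim, with the Tate $a_1,a_2,a_3$ retained and $a_4,a_6$ replaced. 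Then check that the $j$-invariant of the septic, obtained from Step 1, equals $j(W_2)$. Since both are defined over $\Q(\psi,\u)$, equal $j$ leaves only a quadratic twist to rule out. We rule it out by exhibiting an explicit $\Q(\psi,\u)$-rational birational map, obtained by comparing the coordinate functions from Step 1 with $x,y$.

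\emph{Step 4: the short model and the discriminant.} $W_1$ follows from $W_2$ by the standard completion of the square and cube, with $A=-27c_4$ and $B=-54c_6$. The discriminant \eqref{eq:W2disc} is then a direct computation from $b_2,b_4,b_6,b_8$. It is consistent with the isogeny, since the Tate curve has $\Delta=\u^7(\u-1)^7(\u^3-8\u^2+5\u+1)$, and a $7$-isogeny swaps the exponents $7$ and $1$.

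The main obstacle is Step~1 together with the twist verification in Step~3. Finding adjoints through fourteen nodes with coordinates involving $\sqrt{4\u+1}$ and $\zeta_7$ is heavy. I would first try instead to project the septic in $\P^6$ from the span of suitable multiples of $O$, using the quadrics \eqref{eq:KV-uv}, to get a cubic directly. Failing that, I would do the Riemann--Roch computation in a computer algebra system over $\Q(\psi,\u)$, with the Galois-conjugate node sets handled by their symmetric equations.
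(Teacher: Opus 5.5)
Your proposal is correct and is essentially the paper's proof. Your Riemann--Roch computation of $L(3O)$ on the plane septic is what van Hoeij's algorithm does, and $W_2$ is obtained exactly as the V\'elu quotient $\E(X_1(7))/\langle(0,0)\rangle$ (the paper's $a_4=-5v$ is what V\'elu's formula gives). The one difference is that the paper reduces $W_2$ to short form and finds exactly the $W_1$ produced from the septic, so your separate $j$-plus-twist check becomes unnecessary.

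One slip in the Step~3 heuristics should be corrected. The point $(0,0)$ lies on the Tate curve, not on the septic; the Tate curve is the $7$-isogenous curve $(\C/(\Z+\tau\Z))/\langle T\rangle$ (the $\langle T\rangle$-quotient, not the $\langle S\rangle$-quotient). So the idea of sending $S$ or $T$ to $(0,0)$ should be dropped, as your V\'elu route in effect already does.
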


\begin{proof}
Using the point $O$ in $C_{7}$, we apply van Hoeij's algorithm~\cite{vanHoeij} to \eqref{eq:plane-septic} to obtain the first Weierstrass form $W_{1}$.  The second form $W_{2}$ is nothing but the Weierstrass form of the quotient curve $\E(X_{1}(7))/\<(0,0)\>$.  
%We will later explain that this is also a family of elliptic curve with $49$-isogeny.  
We may use standard formulas to convert $W_{2}$ to $W_{1}$ to check that they are the same curve.  Also the discriminant of $W_{2}$ may be calculated routinely.
\end{proof}

\begin{remark}
The coefficients of these Weierstrass models depend only on
$\u$, although the full level-$7$ structure is defined over
$\Q(\psi,\u)$.
The birational transformations are lengthy and are not used below.
%We therefore place them in an ancillary computer algebra file.
\end{remark}

\begin{remark}\label{rem:E4E6}
Suppose that the base field is $\C$ and that
$\u=\u(\tau)$.  The coefficients $A$ and $B$ of $W_1$
can then be written in the form
\begin{equation}\label{eq:AB-Eisenstein}
 A(\u(\tau))=-27\frac{E_4(\tau)}{Q(\tau)^4},
 \quad
 B(\u(\tau))=54\frac{E_6(\tau)}{Q(\tau)^6},
\end{equation}
where $E_4$ and $E_6$ are the normalized Eisenstein series of level 1
\begin{align*}
 E_4(\tau)&=1+240\sum_{n\ge1}\sigma_3(n)q^n
            =1+240q+2160q^2+\cdots,\\
 E_6(\tau)&=1-504\sum_{n\ge1}\sigma_5(n)q^n
            =1-504q-16632q^2+\cdots,
\end{align*}
with $q=e^{2\pi i\tau}$, and
\begin{equation}\label{eq:Q-def}
 Q(\tau)=\frac{R(\tau)}{\u(\tau)^2-\u(\tau)+1}.
\end{equation}
Here
\begin{equation}
 R(\tau)
 =\sum_{m,n\in\Z}q^{m^2+mn+2n^2} %\notag\\
 =1+2\sum_{n\ge1}\biggl(\sum_{d\mid n}
       \Bigl(\frac{-7}{d}\Bigr)\biggr)q^n
   =1+2q+4q^2+6q^4+2q^7+\cdots                         \label{eq:R-def}
\end{equation}
is the familiar weight-one theta series of discriminant $-7$; it is a
holomorphic modular form of weight $1$ on $\G_1(7)$ (equivalently, an
Eisenstein series with the quadratic character modulo $7$).  Thus
$Q$ is a meromorphic modular form of weight $1$ on $\G_1(7)$.

For completeness, \eqref{eq:AB-Eisenstein} may be checked directly from
the polynomial expressions in Proposition~\ref{prop:Weierstrass}.
Indeed, after substituting the $q$-expansion of $\u$, the two sides
have the same Fourier expansions; alternatively, multiplying by $Q^4$
and $Q^6$, respectively, reduces the assertion to identities between
modular forms of weights $4$ and $6$.  The point of
\eqref{eq:AB-Eisenstein} is that the apparently rather large polynomials
$A(\u)$ and $B(\u)$ are simply the pull-backs of the two
classical Eisenstein series, after the natural weight-one normalization
by $Q$.

If we now put
\[
 X'=\Bigl(\frac{Q(\tau)}6\Bigr)^2X,
 \quad
 Y'=\Bigl(\frac{Q(\tau)}6\Bigr)^3Y,
\]
then $W_1$ becomes
\begin{equation}\label{eq:standard-Weierstrass}
 {Y'}^2={X'}^3-\frac{E_4(\tau)}{48}X'
                 +\frac{E_6(\tau)}{864}.
\end{equation}
In particular its discriminant is
\[
 -16\biggl(
 4\Bigl(-\frac{E_4}{48}\Bigr)^3
 +27\Bigl(\frac{E_6}{864}\Bigr)^2
 \biggr)
 =\frac{E_4^3-E_6^2}{1728}
 =\eta(\tau)^{24}.
\]
Thus the model $W_1$ is, after the explicit weight-one rescaling above,
the standard Weierstrass model of the elliptic curve
$\C/(\Z+\tau\Z)$.  This observation will not be needed in the
$2$-division argument, but it explains the modular meaning of the
coefficients of $W_1$.
\end{remark}

% ----------------------------------------------------------------
\section{The $2$-division problem}
\label{sec:2division}
% ----------------------------------------------------------------

\subsection{The fixed locus of inversion}

By Proposition~\ref{prop:theta-embedding}, a non-zero point of order
$2$ satisfies
\begin{equation}\label{eq:2-div}
x_1=x_6,\qquad x_2=x_5,\qquad x_3=x_4.
\end{equation}
Let $\mathcal S(7)$ be the elliptic modular surface defined by
\eqref{eq:KV-uv}, and let $D$ be this linear fixed locus.
The normalization of $\mathcal S(7)\cap D$ parametrizes elliptic curves
with full level-$7$ structure together with a distinguished non-zero
$2$-torsion point; hence it is the modular curve associated with
$\G_0(2)\cap\G(7)$.

Substituting \eqref{eq:2-div} into \eqref{eq:KV-uv}, and 
eliminating two of the four projective coordinates, one obtains
the following more useful model.

\begin{proposition}\label{prop:2div-cubic}
The modular curve attached to $\G_0(2)\cap\G(7)$ admits an affine model
\begin{equation}\label{eq:2div-pre-cubic}
 \left\{ \ 
 \begin{aligned}
 &2\u(\u-1)t^3-(\u^2-\u-1)t^2
       -2\u t+\u=0,\\
 &\psi^7=\u^4(\u-1).
 \end{aligned}
 \right.
\end{equation}
After the change of variable $T=1/t-1$, the first equation becomes
\begin{equation}\label{eq:2div-cubic}
 P_\u(T)
 =
 T^3+T^2-\frac{\u^2-1}{\u}T
 +\frac{(\u-1)^2}{\u}=0.
\end{equation}
\end{proposition}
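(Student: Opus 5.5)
We substitute the fixed-locus conditions \eqref{eq:2-div} directly into the fourteen quadrics \eqref{eq:KV-uv}. After the substitution only the four projective coordinates $x_0,x_1,x_2,x_3$ remain.

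The involution \eqref{eq:minus-one-intro} permutes the quadrics. The equation indexed by $i$ is carried to the one indexed by $-i$, because $x_{-i}=x_i$ and the index pairs $\{i\pm k\}$ go to $\{-i\pm k\}$. So only the equations with $i=0,1,2,3$ survive as independent conditions, two families each, giving eight quadrics in $x_0,\dots,x_3$. For instance, $i=0$ gives
\[
 \psi\u^3x_0^2+\u^4x_3^2-\psi^6x_2^2=0,
 \qquad
 \psi\u^2x_0^2+\u^2x_3^2-\psi^4x_1^2=0,
\]
and the others mix products such as $x_1x_2$, $x_0x_3$, $x_2x_3$, and so on.

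The point $O=(0:\psi^3:-\psi\u:\dots)$ lies in $D$ but must be discarded. Away from $O$ we expect $x_0\neq0$, so we normalize $x_0=1$. The two $i=0$ equations then express $x_1^2$ and $x_2^2$ linearly in $x_3^2$.

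Next we use one of the mixed equations, say those with $i=3$. There $x_{i\pm3}=x_{6},x_0$ and $x_{i\pm1}=x_4,x_2$, so the equations become linear in $x_1,x_2$ given $x_3$. This lets us solve rationally for $x_1/x_3$ and $x_2/x_3$, or, better, for a ratio such as $t=x_1x_2/(\psi^{a}\u^{b}x_3^2)$. The normalization is to be chosen so that the coefficients land in $\Q(\u)$. Substituting back into a remaining equation gives a single cubic in $t$.

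That the result is a cubic is forced: $D\cap\mathcal S(7)$ consists of $O$ together with exactly three points, the three non-zero $2$-torsion points. Since each quadric is $\psi$-homogeneous of a fixed weight (using $\psi^7=\u^4(\u-1)$), the choice of monomial normalization can be made so that all powers of $\psi$ cancel. Matching with \eqref{eq:2div-pre-cubic} then fixes the normalization.

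The step I expect to be the main obstacle is the elimination itself, namely choosing the right invariant $t$ so that the coefficients lie in $\Q(\u)$ rather than $\Q(\psi,\u)$. I would handle it with a resultant or Gröbner computation in $\Q(\psi,\u)[x_1,x_2,x_3]$, working modulo $\psi^7-\u^4(\u-1)$. The simplest check is a necessary condition for the normalization: the weighted degree (with $\psi$ of weight $\tfrac{4\cdot 3+\ldots}{7}$ in $\u$-units) must be compatible.

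Once the cubic is found, the statement about the modular curve follows from the moduli interpretation given before the proposition. The normalization of $\mathcal S(7)\cap D$ is the curve for $\G_0(2)\cap\G(7)$. The function $t$ separates the three points: the cubic is irreducible over $\C(\psi,\u)$ because monodromy acts transitively on the nonzero $2$-torsion. Hence $\C(\psi,\u,t)$ has degree $3=[\G(7):\G_0(2)\cap\G(7)]$ over $\A_0(\G(7))$ and is therefore the whole function field.

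Irreducibility can be checked by noting that $P_\u$ has no root in $\C(\psi,\u)$, for example by examining the expansion at a cusp. Finally, the substitution $t=1/(T+1)$ is a direct computation. Clearing denominators gives $2\u(\u-1)-(\u^2-\u-1)(T+1)-2\u(T+1)^2+\u(T+1)^3=0$. Dividing by $\u$ yields $T^3+T^2-\frac{\u^2-1}{\u}T+\frac{(\u-1)^2}{\u}$, and it remains to verify this expansion.
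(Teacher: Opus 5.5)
There is a genuine gap. You never determine the function $t$, and that is the entire content of the statement: \eqref{eq:2div-pre-cubic} is the minimal polynomial over $\C(\psi,\u)$ of one specific function on the curve. The step ``matching with \eqref{eq:2div-pre-cubic} then fixes the normalization'' is not available. A monomial factor $\psi^a\u^b$ only rescales $t$, and rescaling cannot turn the cubic of one generator of the cubic extension into the cubic of a non-proportional generator.

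Your suggested family $t=x_1x_2/(\psi^a\u^bx_3^2)$ in fact never works. At the $2$-torsion points of Theorem~\ref{thm:2torsion-coordinates} one finds $x_1x_2/x_3^2=\u^4g_i^3/\bigl(\psi^5(\u-g_i-1)\bigr)$, whereas the roots of \eqref{eq:2div-pre-cubic} are $1/(g_i+1)$. Moreover $g^3(g+1)/(\u-g-1)\notin\C(\psi,\u)$. Indeed, $P_\u(g)=0$ gives $g^4+g^3=\frac{\u^2-1}{\u}g^2-\frac{(\u-1)^2}{\u}g$, which is not a $\C(\u)$-multiple of $\u-g-1$, and $\C(\u,g)\cap\C(\psi,\u)=\C(\u)$ by degrees.

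Your elimination sketch is also inaccurate. On $D$ the first $i=3$ equation is $\psi\u^3x_3^2+\u^4x_0x_1-\psi^6x_1x_2=0$, which is not linear in $(x_1,x_2)$. So that pair leads to a quadratic rather than to rational expressions.

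The missing idea is to parametrize by the ratio $x_1/x_0$. On $D$ the two $i=2$ equations are
$\psi\u^3x_2^2+\u^4x_1x_2-\psi^6x_0x_3=0$ and $\psi\u^2x_2^2+\u^2x_1x_2-\psi^4x_1x_3=0$,
and both are linear in $x_3$. Eliminate $x_3$ between them and divide by $\u^2x_2$. This leaves an equation linear in $x_2$, which gives $x_2$, and then $x_3$, as rational functions of $x_0,x_1$; these are exactly \eqref{eq:x2x3}. Substituting them into the remaining equations reduces everything to the single binary cubic form
$2\psi^5x_1^3-\u^2(\u^2-\u-1)x_0x_1^2-2\psi^2\u^2x_0^2x_1+\psi^4\u x_0^3=0$.

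The weight bookkeeping you allude to is simply that $x_1/x_0$ carries the factor $\psi^2/\u$. Put $t=\u x_1/(\psi^2x_0)$, divide by $\psi^4x_0^3$, and use $\psi^7=\u^4(\u-1)$; this gives exactly \eqref{eq:2div-pre-cubic}.

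With this $t$ the rest of your argument goes through. Your degree count identifies $\C(\psi,\u,t)$ with $\A_0(\G_0(2)\cap\G(7))$. Irreducibility over $\C(\psi,\u)$ follows from irreducibility over $\C(\u)$, since $[\C(\psi,\u):\C(\u)]=7$ is prime to $3$. Your expansion after $t=1/(T+1)$ is correct and gives $\u T^3+\u T^2-(\u^2-1)T+(\u-1)^2$.
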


\begin{proof}
Letting $x_{1}=x_{6}$, $x_{2}=x_{5}$, and $x_{3}=x_{4}$ in \eqref{eq:KV-uv}, and using two of them, we obtain
\begin{equation}\label{eq:x2x3}
x_{2}=-\frac{(\psi^2 x_{0} - \u^2 x_{1}) x_{1}}
{\psi (\psi^2 x_{0} - \u x_{1})},
\quad
x_{3} = -\frac{\psi^2 (\psi^2 x_{0}-\u^2 x_{1}) x_{1}^2}
{\u (\psi^2 x_{0}-\u x_{1})^2}.
\end{equation}
Substituting these back into the other equations, we obtain one equation:
\[
2 \psi^5 x_{1}^3 - \u^2 (\u^2 - \u - 1) x_{0} x_{1}^2 - 2 \psi^2 \u^2  x_{1} x_{0}^2 + \psi^4 \u x_{0}^3 =0.
\]
Furthermore, substituting $t=(\u/\psi^2)(x_{1}/x_{0})$, we obtain the first equation of \eqref{eq:2div-pre-cubic}.  Finally, the substitution $T=1/t-1$
transforms it into the form \eqref{eq:2div-cubic}.
\end{proof}

The cubic \eqref{eq:2div-cubic} already exhibits $X_1(14)$ as a
triple cover of $X_1(7)\simeq\P^1_\u$.

\begin{theorem}\label{thm:2torsion-coordinates}
Let $g_1,g_2,g_3$ be the three roots of \eqref{eq:2div-cubic}.
Then the three non-trivial points of order $2$ on the Klein--V\'elu
septic are
\begin{equation}\label{eq:2torsion-coordinates}
\left(
g_i+1:\ifrac{\psi^2}{\u}:
\ifrac{\u-(g_i+1)}{g_i}\ifrac{\psi}{\u}:
\ifrac{\u-(g_i+1)}{g_i^2}\ifrac{\psi^4}{\u^3}:
\ifrac{\u-(g_i+1)}{g_i^2}\ifrac{\psi^4}{\u^3}:
\ifrac{\u-(g_i+1)}{g_i}\ifrac{\psi}{\u}:
\ifrac{\psi^2}{\u}
\right),
\end{equation}
for $i=1,2,3$.
\end{theorem}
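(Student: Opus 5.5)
We would prove Theorem~\ref{thm:2torsion-coordinates} by reversing the elimination in the proof of Proposition~\ref{prop:2div-cubic}. By Proposition~\ref{prop:theta-embedding}, the non-zero $2$-torsion points are exactly the points of $\mathcal S(7)\cap D$ other than $O$. We will first check that none of them lies in the hyperplane $x_0=0$, so that we may normalize $x_0$. The origin $O$ has $x_0=0$ and lies in $D$ only up to sign conventions; in fact $O$ satisfies $x_1=-x_6$, so we must check separately that the only points of $D\cap\{x_0=0\}$ on the curve are spurious. On $D$, the relations \eqref{eq:x2x3} express $x_2,x_3$ in terms of $x_0,x_1$. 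The remaining cubic in $x_0:x_1$, together with the substitution $t=(\u/\psi^2)(x_1/x_0)$ and $T=1/t-1$, gives
\[
\frac{x_1}{x_0}=\frac{\psi^2}{\u}\cdot\frac{1}{g_i+1}.
\]
Thus each root $g_i$ determines exactly one point, and we scale so that $x_0=g_i+1$ and $x_1=x_6=\psi^2/\u$.

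Next we substitute into \eqref{eq:x2x3}. With $x_0=g+1$ and $x_1=\psi^2/\u$, we have
\[
\psi^2x_0-\u x_1=\psi^2 g
\qquad\text{and}\qquad
\psi^2x_0-\u^2x_1=\psi^2\bigl(g+1-\u\bigr).
\]
Hence
\[
x_2=-\frac{\psi^2(g+1-\u)\,\psi^2/\u}{\psi\cdot\psi^2 g}
=\frac{\u-(g+1)}{g}\cdot\frac{\psi}{\u},
\]
and similarly
\[
x_3=-\frac{\psi^2\cdot\psi^2(g+1-\u)\,\psi^4/\u^2}{\u\,\psi^4g^2}
=\frac{\u-(g+1)}{g^2}\cdot\frac{\psi^4}{\u^3}.
\]
Together with $x_4=x_3$, $x_5=x_2$ and $x_6=x_1$, this yields exactly \eqref{eq:2torsion-coordinates}.

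It remains to justify that these three points really lie on the curve and are distinct and non-zero. Membership follows because every equation of \eqref{eq:KV-uv} restricted to $D$ reduces, after the substitution \eqref{eq:x2x3}, to the single cubic of Proposition~\ref{prop:2div-cubic}, which the $g_i$ satisfy. For distinctness, the roots $g_i$ are distinct as functions because the discriminant of $P_\u$ is not identically zero; by Theorem~\ref{thm:intro-X14} it is a nonzero multiple of $\u(\u-1)(\u^3-8\u^2+5\u+1)$, or one can compute it directly. Distinct $g_i$ give distinct ratios $x_1/x_0$, and the genericity needed for \eqref{eq:x2x3} (namely $g_i\ne0$ and $g_i\neq-1$) holds because $P_\u(0)$ and $P_\u(-1)$ are nonzero in $\C(\u)$.

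The main obstacle is the degenerate locus where the elimination divides by zero, that is $x_0=0$ or $\psi^2x_0=\u x_1$. We must show this locus contributes only $O$ or nothing. Since $E[2]\setminus\{O\}$ has exactly three points, and we have produced three distinct points in $D$ on the curve, each different from $O$ because its $x_0\neq0$, counting alone finishes the argument. The only remaining check is that each produced point genuinely lies on the curve, and this holds by the membership argument above.
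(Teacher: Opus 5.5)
Your proposal is correct and takes the same route as the paper: invert $T+1=(\psi^2/\u)(x_0/x_1)$ to get $x_0:x_1=g_i+1:\psi^2/\u$, then read off $x_2,x_3$ from \eqref{eq:x2x3} using the simplifications $\psi^2x_0-\u x_1=\psi^2g$ and $\psi^2x_0-\u^2x_1=\psi^2(g+1-\u)$. Your counting argument adds rigor the paper leaves implicit: points of $D$ on the curve are fixed by $[-1]$, the $g_i$ are distinct since $\operatorname{disc}P_\u\neq0$, and $x_0\neq0$ since $P_\u(-1)=2(\u-1)\neq0$. The only correction is that $O$ does not lie in $D$ at all; it lies in the complementary eigenspace $x_0=0$, $x_j=-x_{7-j}$, so no ``sign convention'' caveat is needed.
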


\begin{proof}
Looking at the proof of Proposition~\ref{prop:2div-cubic}, we have $T+1=(\psi^2/\u)(x_{0}/x_{1})$.  Thus, if $T=g_{i}$, then $x_{0}:x_{1}=g_{i}+1:\psi^{2}/\u$.  Then $x_{2}$ and $x_{3}$ are obtained using \eqref{eq:x2x3}.
\end{proof}

\begin{remark}\label{rmk:Hulek-Craig}
We obtain another model of the modular curve attached to $\G_0(2)\cap\G(7)$ by eliminating $\u$ and $\psi$ from \eqref{eq:2-div}:
\begin{multline}\label{eq:HC-like}
x_{0}^{11} x_{1} x_{2}^{5}-5 x_{0}^{8} x_{1}^{5} x_{2}^{4}
+5 x_{0}^{7} x_{1}^{4} x_{2}^{6}-x_{0}^{6} x_{1}^{10} x_{2}
-2 x_{0}^{6} x_{1}^{3} x_{2}^{8}+10 x_{0}^{5} x_{1}^{9} x_{2}^{3}
\\
+x_{0}^{5} x_{1}^{2} x_{2}^{10}-17 x_{0}^{4} x_{1}^{8} x_{2}^{5}
-x_{0}^{4} x_{1} x_{2}^{12}+x_{0}^{3} x_{1}^{14}
+11 x_{0}^{3} x_{1}^{7} x_{2}^{7}-x_{0}^{3} x_{2}^{14}
\\
-6 x_{0}^{2} x_{1}^{13} x_{2}^{2}-4 x_{0}^{2} x_{1}^{6} x_{2}^{9}+12 x_{0} x_{1}^{12} x_{2}^{4}+4 x_{0} x_{1}^{5} x_{2}^{11}-8 x_{1}^{11} x_{2}^{6}=0.
\end{multline}
The normalization of this plane curve is of genus~$19$.  In the level-$5$ case, this curve corresponds the Hulek-Craig model of Bring's curve \cite{Hulek:1993}, which may be viewed as a model of the modular curve associated with $\Gamma_{0}(2)\cap \Gamma(5)$. 
\end{remark}

\subsection{Septic theta functions attached to the roots}

Recall the definition~\eqref{eq:def-phi12}:
\begin{equation}\label{eq:phi12}
 \phi_1(\tau)=-a_3(\tau)/a_2(\tau),
 \qquad
 \phi_2(\tau)=-a_2(\tau)/a_1(\tau).
\end{equation}
Then we obtain another affine form of Klein's quartic.
\begin{equation}\label{eq:phi-klein}
 \phi_1\phi_2^3-\phi_1^3\phi_2^2=1,
\end{equation}
Moreover, we have
\[
 \psi=\phi_1\phi_2^2,\quad
 \u=\phi_1\phi_2^3.
\]  Their initial expansions are
\begin{align*}
\phi_1(\tau)&=q^{-1/7}+q^{13/7}-q^{20/7}+q^{48/7}
-q^{55/7}+q^{62/7}-q^{69/7}-q^{90/7}+2q^{97/7}+\cdots,\\
\phi_2(\tau)&=q^{-2/7}+q^{5/7}-q^{33/7}+q^{47/7}
+q^{54/7}-q^{68/7}-q^{75/7}-q^{82/7}+2q^{96/7}+\cdots.
\end{align*}
Define
\begin{align}
g_1(\tau)&=\phi_1(\tau)^2/\phi_1(2\tau),\label{eq:g1-def}\\
g_2(\tau)&=\phi_1(\tau)\phi_1(\tau/2)\phi_1(2\tau)
-\phi_1(2\tau)/\phi_2(\tau),\label{eq:g2-def}\\
g_3(\tau)&=-\phi_1(\tau)\phi_1(\tau/2)\phi_1(2\tau)
-\phi_1(\tau)^2\phi_2(\tau)/\phi_2(2\tau).\label{eq:g3-def}
\end{align}
Their leading terms are
\begin{align*}
g_1&=1+2q^2-2q^3-2q^5+4q^7+4q^9-6q^{10}-2q^{11}-4q^{12}+10q^{14}+\cdots,\\
g_2&=q^{-1/2}-1+q^{1/2}+q^{3/2}-q^2+q^3-q^{7/2}+q^5-q^{11/2}
+2q^{13/2}-2q^7+\cdots,\\
g_3&=-q^{-1/2}-1-q^{1/2}-q^{3/2}-q^2+q^3+q^{7/2}+q^5+q^{11/2}
-2q^{13/2}-2q^7+\cdots.
\end{align*}
The proof that these are precisely the roots of \eqref{eq:2div-cubic}
is given in the next section.

% ----------------------------------------------------------------
\section{A cubic relation for septic theta functions}
\label{sec:theta-cubic}
% ----------------------------------------------------------------

This section contains the analytic input of the paper.
The purpose is to prove Theorem~\ref{thm:intro-theta} independently
of the geometric elimination in the preceding section.

\subsection{The first symmetric relation}

For brevity, put
\begin{alignat*}{3}
 x_-&=\phi_1(\tau/2),&\quad x_0&=\phi_1(\tau),&\quad
 x_+&=\phi_1(2\tau),
\\
 y_-&=\phi_2(\tau/2),& y_0 &=\phi_2(\tau),&
 y_+&=\phi_2(2\tau).
\end{alignat*}
Then
\[
 g_1=\frac{x_0^2}{x_+},\qquad
 g_2=x_-x_0x_+-\frac{x_+}{y_0},\qquad
 g_3=-x_-x_0x_+-\frac{x_0^2y_0}{y_+}.
\]
Hence the terms involving $\tau/2$ cancel, and it remains to prove
\begin{equation}\label{eq:first-symmetric-reduced}
 \frac{x_0^2}{x_+}-\frac{x_+}{y_0}-\frac{x_0^2y_0}{y_+}=-1.
\end{equation}

We use the multiplicative theta notation
\[
 \theta(z;Q):=(z;Q)_\infty(Q/z;Q)_\infty,
 \qquad
 (z;Q)_\infty:=\prod_{n=0}^{\infty}(1-zQ^n),
\]
and, as usual,
\[
 \theta(z_1,\ldots,z_m;Q)
 :=\prod_{j=1}^m\theta(z_j;Q).
\]
We shall use the Weierstrass three-term theta identity
\begin{equation}\label{eq:weierstrass-three-term}
\theta(ac,a/c,bd,b/d;Q)
 -\theta(ad,a/d,bc,b/c;Q)
  =\frac{b}{c}\,
   \theta(cd,c/d,ab,a/b;Q)
\end{equation}
together with
\begin{equation}\label{eq:theta-inversion}
 \theta(Q/z;Q)=\theta(z;Q).
\end{equation}
For \eqref{eq:weierstrass-three-term}, see, for example, Koornwinder~\cite{KoornwinderTheta};
this is the multiplicative form of the classical Weierstrass addition
formula.

Put
\[
 \vartheta_r:=\theta(q^r;q^{14})\qquad(1\le r\le6).
\]
The Jacobi triple product, applied to the definition~\eqref{eq:theta-k}, gives
\begin{equation}\label{eq:ak-product}
\begin{aligned}
 a_1(\tau)
   &=q^{25/56}(q^7;q^7)_\infty\,
      \vartheta_1\vartheta_6,\\
 a_2(\tau)
   &=-q^{9/56}(q^7;q^7)_\infty\,
      \vartheta_2\vartheta_5,\\
 a_3(\tau)
   &=q^{1/56}(q^7;q^7)_\infty\,
      \vartheta_3\vartheta_4.
\end{aligned}
\end{equation}
Consequently,
\begin{equation}\label{eq:x0y0-vartheta}
 x_0=q^{-1/7}
   \frac{\vartheta_3\vartheta_4}
        {\vartheta_2\vartheta_5},
 \qquad
 y_0=q^{-2/7}
   \frac{\vartheta_2\vartheta_5}
        {\vartheta_1\vartheta_6}.
\end{equation}
Replacing $\tau$ by $2\tau$ and using \eqref{eq:theta-inversion}, we similarly obtain
\begin{equation}\label{eq:xplus-yplus-vartheta}
 x_+=q^{-2/7}\frac{\vartheta_6}{\vartheta_4},
 \qquad
 y_+=q^{-4/7}\frac{\vartheta_4}{\vartheta_2}.
\end{equation}
It follows that
\begin{equation}\label{eq:three-quotients-vartheta}
 \frac{x_0^2}{x_+}=
 \frac{\vartheta_3^2\vartheta_4^3}
      {\vartheta_2^2\vartheta_5^2\vartheta_6},\quad
 \frac{x_+}{y_0}=
 \frac{\vartheta_1\vartheta_6^2}
      {\vartheta_2\vartheta_4\vartheta_5},\quad
 \frac{x_0^2y_0}{y_+}=
 \frac{\vartheta_3^2\vartheta_4}
      {\vartheta_1\vartheta_5\vartheta_6}.
\end{equation}

We now apply \eqref{eq:weierstrass-three-term}, always with $Q=q^{14}$.
Taking $(a,b,c,d)=(q^6,q^5,q^4,1)$ gives
\begin{equation}\label{eq:weierstrass-specialization-one}
\vartheta_2\vartheta_4\vartheta_5^2
-\vartheta_1\vartheta_5\vartheta_6^2
=q\vartheta_1\vartheta_3\vartheta_4^2.
\end{equation}
Taking $(a,b,c,d)=(q^4,q^2,q,1)$ gives
\[
\vartheta_2^2\vartheta_3\vartheta_5
-\vartheta_1\vartheta_3\vartheta_4^2
=q\vartheta_1^2\vartheta_2\vartheta_6,
\]
or
\begin{equation}\label{eq:weierstrass-specialization-two}
\vartheta_1\vartheta_4^2-\vartheta_2^2\vartheta_5
=-q\frac{\vartheta_1^2\vartheta_2\vartheta_6}{\vartheta_3}.
\end{equation}
Putting the three expressions in \eqref{eq:three-quotients-vartheta}
over a common denominator and applying
\eqref{eq:weierstrass-specialization-one} and
\eqref{eq:weierstrass-specialization-two}, we obtain
\eqref{eq:first-symmetric-reduced}. Thus
\begin{equation}\label{eq:first-symmetric}
g_1+g_2+g_3=-1.
\end{equation}

\subsection{The cubic equation for $g_1$}

The aim is to prove
\[  P_\upsilon(g_1)= g_1^3+g_1^2-\frac{\u^2-1}{\u} g_1+\frac{(\u-1)^2}{\u}=0.\]

Let
\begin{gather*}
a=a_1(\tau),\quad b=a_2(\tau),\quad c=a_3(\tau),\\
\noalign{\noindent and put}
b_+=a_2(2\tau),\quad c_+=a_3(2\tau).
\end{gather*}
Then
\[
\upsilon=\frac{b^2c}{a^3},\quad
g_1=-\frac{c^2b_+}{b^2c_+}.
\]

We recall the standard consequence of the transformation formula for
theta functions with rational characteristics that $a_1,a_2,a_3$ are
holomorphic modular forms of weight $1/2$ on $\Gamma(7)$ with a common
finite-order multiplier. Consequently the functions $a_i(\tau)$ and
$a_i(2\tau)$ are simultaneously modular on
\(
H:=\Gamma_0(2)\cap\Gamma(7).
\)
Indeed, if
\(
\gamma=\sltwo(A,B;C,D)\in H,
\)
then
\[
\gamma^{(2)}
:=
\begin{pmatrix}2&0\\0&1\end{pmatrix}
\gamma
\begin{pmatrix}1/2&0\\0&1\end{pmatrix}
=
\begin{pmatrix}A&2B\\ C/2&D\end{pmatrix}
\in\Gamma(7),
\]
and $2\gamma\tau=\gamma^{(2)}(2\tau)$.
Substitution gives
\[
P_\upsilon(g_1)=\frac{F(\tau)}{c_+^3a^3b^6c},
\]
where
\begin{multline}\label{eq:F-theta-identity}
F=-b_+^3a^3c^7+b_+^2c_+a^3b^2c^5-b_+c_+^2a^6b^2c^2 
+b_+c_+^2b^6c^4+c_+^3a^6b^4-2c_+^3a^3b^6c+c_+^3b^8c^2.
\end{multline}
Every monomial in $F$ has degree $10$ in the theta constants at $\tau$
and degree $3$ in those at $2\tau$. Hence $F$ is a holomorphic modular
form of weight $13/2$ on $H$, with a finite-order multiplier.

To avoid using a half-integral-weight valence formula, take a sufficiently
large even power of $F$ so that the weight is integral and the multiplier
is trivial, and apply the ordinary valence formula to that power. Dividing
the resulting inequality by the exponent gives the sufficient bound
\[
\frac{13/2}{12}[\mathrm{SL}_2(\mathbf Z):H].
\]
Since
\[
[\mathrm{SL}_2(\mathbf Z):H]
=3[\mathrm{SL}_2(\mathbf Z):\Gamma(7)]
=1008,
\]
the bound is $546$. The width of the cusp $\infty$ for $H$ is $7$, so it
is enough to verify
\[
\operatorname{ord}_qF>\frac{546}{7}=78.
\]

From the definition of the septic theta constants,
\begin{equation}\label{eq:ak-exact-series}
a_k(\tau)=
\sum_{n\in\mathbf Z}(-1)^{n+k+1}
q^{(14n+7-2k)^2/56},
\qquad k=1,2,3.
\end{equation}
Using \eqref{eq:ak-exact-series}, an exact symbolic expansion of
\eqref{eq:F-theta-identity} gives
\[
F(\tau)=O(q^{90}).
\]
Thus $F=0$, and hence
\[
P_{\upsilon(\tau)}(g_1(\tau))=0.
\]

\subsection{The product of the remaining roots}

Write
\begin{alignat*}{3}
a_-&=a_1(\tau/2),\quad &b_-&=a_2(\tau/2),&\quad c_-&=a_3(\tau/2),
\\
a_+&=a_1(2\tau),&b_+&=a_2(2\tau),&c_+&=a_3(2\tau).
\end{alignat*}
Put
$A=x_-x_0x_+$, $B=x_+/y_0$, and
$C=x_0^2y_0/y_+$.
Then, we have $g_2=A-B$, and $g_3=-A-C$.
It is therefore enough to prove
\begin{equation}\label{eq:ABC-product}
(A-B)(A+C)=x_0^3y_0x_+.
\end{equation}
Using $x_0=-c/b$, $y_0=-b/a$, and the analogous formulas at
$\tau/2$ and $2\tau$, this is equivalent to $K(\tau)=0$, where
\begin{equation}\label{eq:K-theta-identity}
K=a^2b_-c_-c_+-aa_+b_-^2c+ac\,c_-^2c_+ -a_+b_-c^2c_-+b_-^2b_+c^2.
\end{equation}
Each monomial in $K$ has degree $2$ in the theta constants at $\tau$,
degree $2$ in those at $\tau/2$, and degree $1$ in those at $2\tau$.
The three sets of theta constants are simultaneously modular on
$\Gamma(14)$; hence $K$ is a holomorphic modular form of weight $5/2$
on $\Gamma(14)$ with a finite-order multiplier.

As above, take a sufficiently large even power and apply the ordinary
valence formula. This gives the sufficient bound
%\[
%[\mathrm{SL}_2(\mathbf Z):\Gamma(14)].
%\]
%Since
\[
\frac{5/2}{12}[\mathrm{SL}_2(\mathbf Z):\Gamma(14)]
=\frac{5}{24}\cdot14^3\left(1-\frac1{2^2}\right)
\left(1-\frac1{7^2}\right)=420.
\]
The width of the cusp $\infty$ is $14$, so it is
enough to verify
$\ord_qK>\frac{420}{14}=30$.
Using \eqref{eq:ak-exact-series}, with $\tau$ replaced by $\tau/2$ and
$2\tau$ where appropriate, an exact symbolic computation gives
$K(\tau)=O(q^{40})$.
Thus $K=0$, and therefore
\begin{equation}\label{eq:g2g3-intermediate}
g_2g_3=-x_0^3y_0x_+.
\end{equation}

From the Klein relation \textup{(4.8)} and \textup{(2.5)},
$x_0y_0^3=\upsilon$, and $x_0^3y_0^2=\upsilon-1.$
Since $g_1=x_0^2/x_+$, we have
$x_0^3y_0x_+=(\upsilon-1)^2/\upsilon g_1$.
Hence we have
\begin{equation}\label{eq:g2g3-v}
g_2g_3=-\frac{(\upsilon-1)^2}{\upsilon g_1}.
\end{equation}
Multiplying by $g_1$, we obtain
\[
g_1g_2g_3
=-\frac{(\upsilon-1)^2}{\upsilon}
=-\upsilon-\frac1{\upsilon}+2.
\]
Finally, by \eqref{eq:first-symmetric},
\[
g_1g_2+g_2g_3+g_3g_1
=g_1(g_2+g_3)+g_2g_3
=-g_1-g_1^2-\frac{(\upsilon-1)^2}{\upsilon g_1}.
\]
Dividing $P_\upsilon(g_1)=0$ by $g_1$ gives
\[
g_1^2+g_1-\frac{\upsilon^2-1}{\upsilon}
+\frac{(\upsilon-1)^2}{\upsilon g_1}=0.
\]
Therefore
\[
g_1g_2+g_2g_3+g_3g_1=-\upsilon+\frac1{\upsilon}.
\]
Together with \eqref{eq:first-symmetric}, these identities prove
\[
P_\upsilon(T)=(T-g_1)(T-g_2)(T-g_3),
\]
and complete the proof of Theorem~\ref{thm:intro-theta}.

% ----------------------------------------------------------------
\section{Modular function fields of level $14$}
\label{sec:level14}
% ----------------------------------------------------------------

The preceding sections explain why the level-$14$ function fields
should be organized by the Galois theory of the $2$-division cubic.
This is the point at which we pass from the geometry of one elliptic
curve to the tower of modular curves.

Put
\begin{equation}\label{eq:delta}
 \delta(\tau)
 =
 \frac{\u(\tau)^2}
      {2(\u(\tau)-1)}
 (g_1-g_2)(g_2-g_3)(g_3-g_1).
\end{equation}
Then
\begin{equation}\label{eq:delta-square}
 \delta(\tau)^2
 =
 \u(\tau)(\u(\tau)-1)
 \bigl(\u(\tau)^3-8\u(\tau)^2
      +5\u(\tau)+1\bigr).
\end{equation}

\subsection{The principal congruence subgroup}

\begin{theorem}\label{thm:X14}
The field $\A_0(\G(14))$ is the splitting field of
$P_\u(T)$ over $\A_0(\G(7))$.  In particular,
\[
 \A_0(\G(14))
 =
 \C(\u,\psi,g_1,\delta)
 =
 \C(\u,\psi,g_1,g_2).
\]
The equations \eqref{eq:X14-intro} define an affine model of $X(14)$,
whose smooth projective model has genus $49$.
\end{theorem}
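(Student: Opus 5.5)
The argument has two halves. First, the $g_i$ and $\delta$ are modular for $\G(14)$. Second, the field they generate over $\A_0(\G(7))$ has degree $6 = [\G(7)\cdot\{\pm1\}:\G(14)\cdot\{\pm1\}]$.

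\emph{Modularity.} By Theorem~\ref{thm:intro-theta}, each $g_i$ is a root of $P_\u(T)$, whose coefficients lie in $\C(\u)\subset\A_0(\G(7))$. We use the theta-product expressions \eqref{eq:ak-product} and the remark from the proof of $K=0$: the theta constants at $\tau/2$, $\tau$, $2\tau$ are simultaneously modular on $\G(14)$ with a common finite-order multiplier. The $g_i$ are quotients of monomials of equal degree (or differences of such, each term homogeneous of degree $0$). Hence the multiplier cancels and $g_1,g_2,g_3\in\A_0(\G(14))$. The same then holds for $\delta$, by \eqref{eq:delta}. So the splitting field $L=\A_0(\G(7))(g_1,g_2,g_3)$ lies in $\A_0(\G(14))$.

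\emph{Degree.} The extension $\A_0(\G(14))/\A_0(\G(7))$ is Galois with group $\G(7)/\pm\G(14)\simeq\SL_2(\F_2)\simeq S_3$, so it has degree $6$. It therefore suffices to show $[L:\A_0(\G(7))]=6$, i.e.\ that $P_\u$ is irreducible over $\C(\psi,\u)$ and its discriminant is not a square there.

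For irreducibility we use the $q$-expansions. The field $\A_0(\G(7))$ consists of functions with expansions in $q^{1/7}$. The roots $g_2,g_3$ have expansions genuinely involving $q^{1/2}$, so neither lies in $\A_0(\G(7))$. For $g_1$ we argue via the cusp widths: if $g_1\in\A_0(\G(7))$, then its conjugates under $\G(7)$ would all equal $g_1$. But the element $\sltwo(1,7;0,1)\in\G(7)$ sends $\tau/2\mapsto\tau/2+7/2$, which interchanges $g_2$ and $g_3$ (it changes the sign of $q^{1/2}$, as the expansions show). A suitable element of $\G(7)$ that is nontrivial mod $2$ and of order $3$ mod $2$ then permutes $g_1$ as well. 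Alternatively, and more simply, a rational root of $P_\u$ over $\C(\psi,\u)$ would give a degree-$1$ factor. Since $\C(\psi,\u)/\C(\u)$ has degree $7$, coprime to $3$, irreducibility over $\C(\u)$ implies irreducibility over $\C(\psi,\u)$. Irreducibility over $\C(\u)$ holds because $P_\u$, cleared of denominators as $\u T^3+\u T^2-(\u^2-1)T+(\u-1)^2$, has no root in $\C(\u)$: check via the rational root test in $\C[\u]$, or by specialization.

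For the quadratic part, \eqref{eq:delta-square} gives $\delta^2=\u(\u-1)(\u^3-8\u^2+5\u+1)$. This has simple zeros at $\u=0$ and $\u=1$, so it is not a square in $\C(\u)$. It is not a square in $\C(\psi,\u)$ either, again because $[\C(\psi,\u):\C(\u)]=7$ is odd. Hence $L$ has degree $6$, and $L=\A_0(\G(14))$.

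\emph{Generators and model.} Since the Galois group is $S_3$, the field $\C(\u,\psi,g_1)$ has degree $3$ and $\C(\u,\psi,\delta)$ has degree $2$ over the base. Together they give the full degree-$6$ field, so $\A_0(\G(14))=\C(\u,\psi,g_1,\delta)$. Also $g_2\notin\C(\u,\psi,g_1)$, because the stabilizer of $g_1$ in $S_3$ moves $g_2$. Hence $\C(\u,\psi,g_1,g_2)$ equals $L$ as well. The equations \eqref{eq:X14-intro} are then just the minimal relations, namely the Klein quartic, $\u P_\u(g_1)=0$, and \eqref{eq:delta-square}, so they define an affine model of $X(14)$.

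The genus $49$ follows from the standard formula for $\G(N)$: $1+\frac{N-6}{12N}\cdot\frac{N^3}{2}\prod_{p\mid N}(1-p^{-2})$, which for $N=14$ gives $1+\frac{8}{168}\cdot 1008=49$.

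The main obstacle is the irreducibility and non-square checks. The odd-degree argument ($7$ is coprime to $2$ and $3$) reduces both to $\C(\u)$, where they are routine.
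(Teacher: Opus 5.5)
Your proof is correct, but it takes a different route from the paper's.

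\textbf{The paper's route.} The paper argues moduli-theoretically. Adjoining one non-zero $2$-torsion point to a full level-$7$ structure is the passage from $\Gamma(7)$ to $\Gamma_0(2)\cap\Gamma(7)$, so $g_1$ generates that cubic subextension. Adjoining all three such points adds a full level-$2$ structure, so by $\Gamma(7)/\Gamma(14)\simeq\mathrm{SL}_2(\mathbf F_2)$ the normal closure is $\mathcal A_0(\Gamma(14))$, and $\delta$ generates the quadratic subfield. This explains \emph{why} level $14$ appears. However, it leans on the moduli dictionary (the roots of $P_\upsilon$ as coordinates of $2$-torsion points, and the field they generate as $\mathcal A_0(\Gamma(14))$) rather than on a computation.

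\textbf{Your route.} You verify both halves directly:
\begin{enumerate}
\item the inclusion $L\subseteq\mathcal A_0(\Gamma(14))$, from the theta expressions;
\item the equality $[L:\mathcal A_0(\Gamma(7))]=6$, from an explicit Galois-group computation for $P_\upsilon$.
\end{enumerate}
For the second, $P_\upsilon$ has no root in $\mathbf C(\upsilon)$, and its discriminant $4(\upsilon-1)^3(\upsilon^3-8\upsilon^2+5\upsilon+1)/\upsilon^3$ has odd valuation at $\upsilon=0,1$. Both facts pass to $\mathbf C(\psi,\upsilon)$ because $7$ is prime to $2$ and $3$. This needs only Theorem~\ref{thm:intro-theta} and the index $[\Gamma(7):\Gamma(14)]=6$, so it is more self-contained. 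You also supply the genus computation, which the paper leaves implicit.

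\textbf{Three small points to tidy.}
\begin{enumerate}
\item Specialization cannot certify irreducibility over $\mathbf C(\upsilon)$, since every cubic over $\mathbf C$ splits. Rely on the rational root test instead. By Gauss's lemma the candidate roots are $c(\upsilon-1)^k/\upsilon^m$ with $0\le k\le 2$ and $0\le m\le 1$, and each is excluded by comparing coefficients or leading and lowest terms in $\upsilon$. With this in hand, the detour through $\tau\mapsto\tau+7$ is unnecessary.
\item For modularity you need not invoke a common multiplier. Since $\phi_j\in\mathcal A_0(\Gamma(7))$, the functions $\phi_j(2\tau)$ and $\phi_j(\tau/2)$ are invariant under $\Gamma_0(2)\cap\Gamma(7)$ and $\Gamma^0(2)\cap\Gamma(7)$ respectively, where $\Gamma^0(2)$ consists of the matrices with even upper-right entry. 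These groups together with $\Gamma(7)$ intersect in $\Gamma(14)$, which gives $g_i\in\mathcal A_0(\Gamma(14))$ directly.
\item What makes the three equations a complete set of relations for $X(14)$ is the count $7\cdot 3\cdot 2=42=[\mathcal A_0(\Gamma(14)):\mathbf C(\upsilon)]$. This is worth stating explicitly rather than calling them ``the minimal relations.''
\end{enumerate}
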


\begin{proof}
The functions $\u$ and $\psi$ generate $\A_0(\G(7))$.
Over the moduli problem with full level-$7$ structure, adjoining one
non-zero $2$-torsion point amounts to passing from $\G(7)$ to
$\G_0(2)\cap\G(7)$; this is an extension of degree $3$.
Thus $g_1$ generates the corresponding cubic subextension.

Reduction modulo $2$ gives
\[
 \G(7)/\G(14)\simeq \mathrm{SL}_2(\F_2)\simeq S_3.
\]
Adjoining all three non-zero $2$-torsion points is therefore equivalent
to adjoining a full level-$2$ structure, and, by the Chinese remainder
theorem, to passing from full level $7$ to full level $14$.
Consequently the normal closure of
$\A_0(\G(7))(g_1)/\A_0(\G(7))$ is exactly
$\A_0(\G(14))$.

The unique quadratic subextension corresponds to the alternating
subgroup of $S_3$ and is generated by the discriminant $\delta$.
Hence
\[
 \A_0(\G(14))
 =\C(\u,\psi,g_1,\delta)
 =\C(\u,\psi,g_1,g_2).
\]
Equations \eqref{eq:X14-intro} are simply the Klein relation, the
$2$-division cubic, and the discriminant relation
\eqref{eq:delta-square}.
\end{proof}

\subsection{Intermediate fields}

Rather than treating every subgroup in a separate long subsection,
we summarize the field-theoretic pattern first.
The most useful intermediate fields between $\A_{0}(\G(14))$ and $\A_{0}(\G_{0}(7))$ are listed below.

\begin{center}
\begin{tabular}{lll}
\toprule
Group $G$ & $\A_0(G)$ & genus\\
\midrule
$\G_0(2)\cap\G(7)$
 & $\C(\psi,\u,g_1)$ & $19$\\
$\Gtwo\cap\G(7)$
 & $\C(\psi,\u,\delta)$ & $17$\\
$\G(2)\cap\G_1(7)$
 & $\C(\u,g_1,g_2)$ & $4$\\
$\Gtwo\cap\G_1(7)$
 & $\C(\u,\delta)$ & $2$\\
$\G_1(14)=\G_0(2)\cap\G_1(7)$
 & $\C(\u,g_1)$ & $1$\\
$\G(2)\cap\G_0(7)$
 & $\C(\sqrt{j_{7}},j_{14})=\C(\sqrt{j_{7}},g_{1})$ & $2$\\
$\Gtwo\cap\G_0(7)$
 & $\C(\sqrt{j_{7}})$ & $0$\\
$\G_0(14)=\G_0(2)\cap\G_0(7)$
 & $\C(j_{7},j_{14})=\C(j_{7},g_{1})$ & $1$\\
\bottomrule
\end{tabular}
\end{center}

\begin{figure}
\includegraphics[scale=0.75]{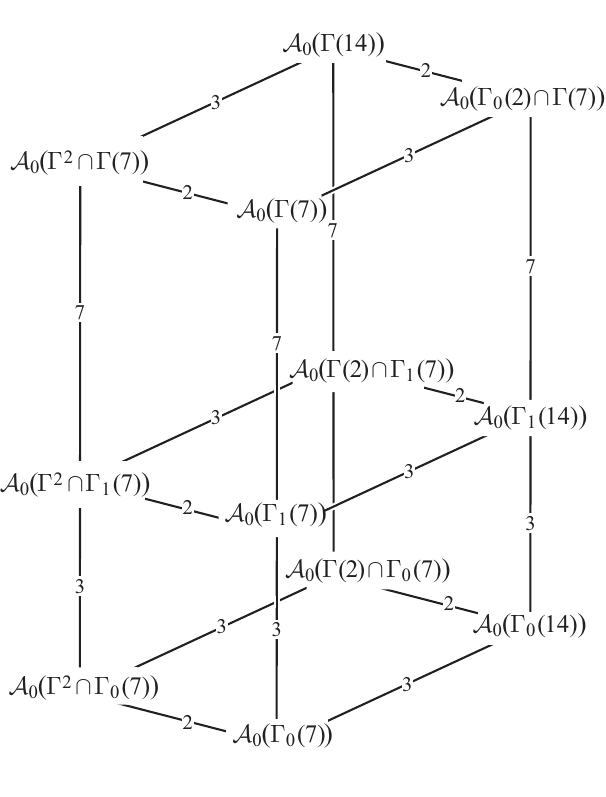}
\caption{Function fields between $\A_0(\Gamma_0(7))$ and $\A_0(\Gamma(14))$.}
\label{fig:function-fields}
\end{figure}

\subsection{Selected explicit models}

We now give explicit two-generator models for the five intermediate
curves listed above.  We begin with the two curves of largest genus.
In these two cases the plane models are rather singular, but their
normalizations and genera can be determined directly from the natural
coverings over the $\u$-line.

\begin{proposition}\label{prop:genus19-model}
Let $g=g_1$.  Then
\[
 \A_0(\G_0(2)\cap\G(7))
 =
 \C(\psi,\u,g)
 =
 \C(\psi,g).
\]
The functions $\psi$ and $g$ satisfy
\begin{equation}\label{eq:genus19-psig}
 (1-g)^5\psi^{14}
 +g(g-1)^3(g+1)Q(g)\psi^7
 +g^2(g+1)^5=0,
\end{equation}
where
\[
 Q(g)= 
 g^{10}+7g^9+23g^8+41g^7+37g^6+g^5
  -32g^4-30g^3-7g^2+3g+2.
\]
The normalization of the plane curve
\eqref{eq:genus19-psig} has genus $19$.
\end{proposition}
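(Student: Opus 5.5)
We prove the three assertions of Proposition~\ref{prop:genus19-model} in order: that $\u$ can be dropped from the generators, that $\psi$ and $g$ satisfy \eqref{eq:genus19-psig}, and that the normalization has genus $19$.

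\textbf{Step 1: the field.} By Theorem~\ref{thm:X14} and the table, $\A_0(\G_0(2)\cap\G(7))=\C(\psi,\u,g_1)$. It remains to show $\u\in\C(\psi,g)$. The cubic \eqref{eq:2div-cubic}, multiplied by $\u$, reads
\[
 (1-g)\u^2+\bigl(g^3+g^2-2\bigr)\u+(g+1)=0,
\]
which is quadratic in $\u$ over $\C(g)$. The relation $\psi^7=\u^4(\u-1)$ expresses $\psi^7$ as a polynomial in $\u$. Reducing $\u^4(\u-1)$ modulo this quadratic gives
\[
 \psi^7=\alpha(g)\,\u+\beta(g),\qquad \alpha,\beta\in\C(g).
\]
Provided $\alpha\neq0$, this yields $\u=(\psi^7-\beta)/\alpha\in\C(\psi,g)$.

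\textbf{Step 2: the relation \eqref{eq:genus19-psig}.} Substitute $\u=(\psi^7-\beta)/\alpha$ back into the quadratic. Equivalently, take the resultant in $\u$ of the quadratic and $\psi^7-\u^4(\u-1)$. This gives a polynomial that is quadratic in $\psi^7$, with coefficients in $\C[g]$. After clearing denominators and removing constant factors, we expect exactly \eqref{eq:genus19-psig}. As a consistency check, the leading coefficient $(1-g)^5$ and the constant term $g^2(g+1)^5$ should come from the norms of $\u-1$ and $\u$ over $\C(g)$. Irreducibility follows from degrees: $[\C(\psi,g):\C(g)]=[\C(\psi,\u,g):\C(\u,g)]\cdot[\C(\u,g):\C(g)]=7\cdot2=14$. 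This matches the degree of \eqref{eq:genus19-psig} in $\psi$, so the polynomial is the minimal polynomial.

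\textbf{Step 3: the genus.} We apply Riemann--Hurwitz to the covering $X\to\P^1_\u$ of degree $21$. It factors as $X\to X_1(14)$ of degree $7$, a Kummer cover $\psi^7=\u^4(\u-1)$ pulled back to the elliptic curve $X_1(14)=\C(\u,g)$. Ramification can only occur over points where $\u$ or $\u-1$ has zero or pole order not divisible by $7$.

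Over $X_1(7)$, the Kummer cover ramifies totally over $\u=0,1,\infty$, and these are cusps. On $X_1(14)$, we read off from the cubic how each of these points splits. At $\u=\infty$ the expansions show $T\sim\pm\u^{1/2}$ or $T$ bounded, so the fibre has ramification indices $2,1$. The points $\u=0$ and $\u=1$ behave similarly. On each point above, the order of $\u^4(\u-1)$ is $1$ or $2$ times a unit, hence prime to $7$. So the Kummer cover is totally ramified at all six cusps of $X_1(14)$ lying over $0,1,\infty$.

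Riemann--Hurwitz then gives
\[
 2g_X-2=7\cdot 0+6\cdot 6=36,
\]
so $g_X=19$. This agrees with Remark~\ref{rmk:Hulek-Craig}, and is also consistent with the index computation for $\G_0(2)\cap\G(7)$ (index $3\cdot168\cdot2$ up to $\pm1$) with 72 cusps.

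\textbf{Main obstacle.} The hardest step is the resultant computation in Step 2 that produces $Q(g)$ exactly. It is routine but heavy, and I would do it by computer algebra. The ramification bookkeeping in Step 3 also needs care: I would cross-check it by counting cusps, which for $\G_0(2)\cap\G(7)$ should give $6\cdot 12=72$, a count consistent with genus $19$.
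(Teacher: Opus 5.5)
Your proof follows the paper's route, so it works. You eliminate $\upsilon$ between $(1-g)\upsilon^2+(g^3+g^2-2)\upsilon+(g+1)=0$ and $\psi^7=\upsilon^4(\upsilon-1)$. You recover $\upsilon$ linearly from $\psi^7$; your reduction modulo the quadratic is exactly the paper's first subresultant. You then get the genus from Riemann--Hurwitz for the $7$-fold Kummer cover of the genus-$1$ function field $\C(\upsilon,g)$. That cover is totally ramified at the six points above $\upsilon=0,1,\infty$, where $\upsilon^4(\upsilon-1)$ has orders $8,4;\ 2,1;\ -10,-5$.

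There are two things to fix.

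First, you must actually settle ``provided $\alpha\neq0$'', because your degree count in Step~2 uses $\C(\psi,g)=\C(\psi,\upsilon,g)$. The paper does this by exhibiting the explicit formula, whose denominator is $(g+1)R(g)$. You can also argue directly:
\begin{itemize}
\item If $\alpha=0$, then $\upsilon^4(\upsilon-1)$ lies in $\C(g)$.
\item The deck involution of the double cover $X_1(14)\to\mathbf P^1_g$ would then have to fix all six of those points, since their orders are distinct.
\item But a double cover of $\mathbf P^1$ by a genus-$1$ curve has only four fixed points.
\end{itemize}
Similarly, the paper derives $g(\C(\upsilon,g))=1$ from the six branch values of type $(2,1)$ rather than quoting it.

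Second, your cusp cross-check is wrong. The group $\Gamma_0(2)\cap\Gamma(7)$ is torsion-free of index $504$ in $\mathrm{PSL}_2(\mathbf Z)$, and it has $48$ cusps, not $72$. There are two cusps above each of the $24$ cusps of $X(7)$: $6$ above $\upsilon=0,1,\infty$ and $42$ above the roots of $\upsilon^3-8\upsilon^2+5\upsilon+1$. This gives genus $1+42-24=19$, whereas $72$ cusps would give genus $7$. So your sanity check as written would contradict your own answer.
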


\begin{proof}
The function $g=g_1$ satisfies
\[
 g^3+g^2-\frac{\u^2-1}{\u}g
 +\frac{(\u-1)^2}{\u}=0,
\]
or equivalently
\begin{equation}\label{eq:g19-cubic}
 (1-g)\u^2
 +(g^3+g^2-2)\u
 +(g+1)=0.
\end{equation}
Together with $\psi^7=\u^4(\u-1)$,
elimination of $\u$ gives \eqref{eq:genus19-psig}.

Conversely, putting $P=\psi^7$, the first subresultant of these two
equations is linear in $\u$ and gives
\[
 \u=
 \frac{
 (g-1)^2P
 -(g+1)^2(g^3+g^2-1)(g^3+3g^2+4g+1)}
 {(g+1)R(g)},
\]
where
\[
 R(g)=
 g^9+5g^8+11g^7+9g^6-5g^5-17g^4
 -10g^3+2g^2+5g+1.
\]
Thus $\u\in\C(\psi,g)$, and hence
\[
 \C(\psi,\u,g)=\C(\psi,g).
\]

To compute the genus, first consider $K_1=\C(\u,g)$.
The degree-$3$ map corresponding to
$K_1/\C(\u)$ is branched at $0$, $1$, $\infty$,
and at the three roots of
\(
 \u^3-8\u^2+5\u+1
\).
Each of these six branch values has ramification type $(2,1)$.
Consequently, we have
\[
 2g(K_1)-2=3(-2)+6=0,
\]
and thus $g(K_1)=1$.

We now adjoin $\psi$, with $\psi^7=\u^4(\u-1)$.
Above each of $\u=0,1,\infty$, the degree-$3$ cover has
one ramified point of index $2$ and one unramified point.  Hence the
orders of $\u^4(\u-1)$ at the six points lying above
$0,1,\infty$ are respectively
\(
 8,\ 4; \ 2,\ 1; \ -10,\ -5
\).
None of these integers is divisible by $7$.  Thus the 
degree~$7$ cyclic extension $K_1(\psi)/K_1$ is totally ramified at precisely
these six points and unramified elsewhere.  Riemann--Hurwitz gives
\[
 2g-2 =
 7\bigl(2g(K_1)-2\bigr)+6(7-1) =36.
\]
Thus $g=19$.
\end{proof}

For the genus-$17$ curve it is convenient to use the following
normalization of the square root of the discriminant:
\begin{equation}\label{eq:Delta-normalization}
 \Delta^2
 =
 4\,\frac{(\u-1)^3
 (\u^3-8\u^2+5\u+1)}{\u^3}.
\end{equation}

\begin{proposition}\label{prop:genus17-model}
Put
\(
 x={\u\Delta}/({\u-1})\),
\(
 y={\psi}/{\u}
\).
Then, we have
\[
 \A_0(\Gtwo\cap\G(7))
 =\C(\psi,\u,\Delta)
 =\C(x,y).
\]
Moreover, $x$ and $y$ satisfy
\begin{multline}\label{eq:genus17-xy}
64y^{28}
-x^6y^{21}
-52x^4y^{21}
-1520x^2y^{21}
-18496y^{21} \\
-84x^4y^{14}
+5264x^2y^{14}
-3648y^{14} 
-1072x^2y^7
+64y^7
+16x^2=0.
\end{multline}
The normalization of the plane curve
\eqref{eq:genus17-xy} has genus $17$.
\end{proposition}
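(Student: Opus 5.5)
We plan to argue in three steps, in the same way as for Proposition~\ref{prop:genus19-model}: first identify the function field, then show that $x,y$ generate it, and finally compute the genus by Riemann--Hurwitz over the $\u$-line.

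\emph{Identification of the field.} The subgroup $\Gtwo\cap\G(7)$ is the preimage in $\G(7)$ of the alternating subgroup $A_3\subset S_3\simeq\G(7)/\G(14)$. By Theorem~\ref{thm:X14}, its fixed field is the unique quadratic subextension of $\A_0(\G(14))/\A_0(\G(7))$, which is generated by a square root of the discriminant of $P_\u(T)$. Comparing \eqref{eq:Delta-normalization} with \eqref{eq:delta-square}, we get $\Delta^2=4(\u-1)^2\delta^2/\u^4$, so $\Delta=\pm 2(\u-1)\delta/\u^2$. Hence $\C(\psi,\u,\Delta)=\C(\psi,\u,\delta)=\A_0(\Gtwo\cap\G(7))$.

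\emph{Generation by $x,y$ and the relation.} From $\psi^7=\u^4(\u-1)$ we obtain $y^7=(\u-1)/\u^3$. From the definition of $x$,
\[
x^2=\frac{4(\u^3-8\u^2+5\u+1)}{\u(\u-1)},
\]
so $x^2$ lies in $\C(\u)$ and is of degree $3$ in $\u$. We will compute the resultant in $\u$ of the two relations
\[
y^7\u^3-\u+1=0,\qquad \u(\u-1)x^2-4(\u^3-8\u^2+5\u+1)=0.
\]
This should give \eqref{eq:genus17-xy}, possibly after removing an extraneous factor. Next we recover $\u$ rationally from $(x^2,y^7)$: we take a subresultant linear in $\u$, or equivalently reduce the cubic $(x^2-4)\u^3+\dots$ modulo $y^7\u^3=\u-1$ to eliminate $\u^3$. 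Once $\u\in\C(x,y)$ is established, we get $\psi=y\u$ and $\Delta=(\u-1)x/\u$, so $\C(x,y)=\C(\psi,\u,\Delta)$. The risk at this step is that the reduction degenerates, for example if both the cubic and quadratic coefficients vanish identically. If that happens, we will fall back on the degree count $[\C(x,y):\C(y)]$ read off from \eqref{eq:genus17-xy}.

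\emph{Genus.} Here we use the tower $\C(\u)\subset K_2=\C(\u,\Delta)\subset K_2(\psi)$. The quadratic extension $K_2/\C(\u)$ is branched exactly where $\u(\u-1)(\u^3-8\u^2+5\u+1)$ has odd order, with $\infty$ also included because the degree $5$ is odd. These are $6$ branch points, so Riemann--Hurwitz gives $2g(K_2)-2=-4+6$, and $g(K_2)=2$. This agrees with the table entry for $\Gtwo\cap\G_1(7)$.

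Above $\u=0,1,\infty$ there is a single point of $K_2$, ramified of index $2$. The orders of $\u^4(\u-1)$ at these points are $8$, $2$ and $-10$, none of which is divisible by $7$. Therefore $K_2(\psi)/K_2$ is cyclic of degree $7$ and totally ramified at exactly $3$ points. Riemann--Hurwitz then gives
\[
2g-2=7\cdot 2+3\cdot 6=32,
\]
so $g=17$.

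The main obstacle is the explicit elimination: checking that the resultant is exactly \eqref{eq:genus17-xy}, with the displayed coefficients and no extraneous factors, and confirming that $\u$ is rational in $x,y$. This is a direct computer algebra computation, but it is where errors are most likely to appear.
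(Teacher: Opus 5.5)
Your plan is essentially the paper's: eliminate $\upsilon$ against $y^7\upsilon^3-\upsilon+1=0$, recover $\upsilon$ by a Euclidean/subresultant step, then apply Riemann--Hurwitz over the $\upsilon$-line. Two parts are correct as written. The field identification via $\Delta=\pm2(\upsilon-1)\delta/\upsilon^2$ and the $A_3$-fixed field is right, and is more explicit than the paper. The genus count through the genus-$2$ field $\C(\upsilon,\delta)$ is also right; it merely rearranges the paper's compositum count, which uses ramification indices $14$ at $0,1,\infty$ and $2$ at the roots of $\upsilon^3-8\upsilon^2+5\upsilon+1$. The problem is the elimination step, which rests on a wrong formula for $x^2$. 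From $x=\upsilon\Delta/(\upsilon-1)$ and \eqref{eq:Delta-normalization},
\[
x^2=\frac{\upsilon^2\Delta^2}{(\upsilon-1)^2}
=\frac{4(\upsilon-1)(\upsilon^3-8\upsilon^2+5\upsilon+1)}{\upsilon},
\]
so $\upsilon-1$ belongs in the numerator, equivalently $x=\pm2\delta/\upsilon$. What you wrote is $4j_7$, the square of $2\delta/(\upsilon(\upsilon-1))$.

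This error would make the computation fail. Your relation $\upsilon(\upsilon-1)x^2=4(\upsilon^3-8\upsilon^2+5\upsilon+1)$ is cubic in $\upsilon$ with no $t=y^7$ in it. Its resultant with $t\upsilon^3-\upsilon+1$ therefore has degree at most $3$ in $y^7$. By contrast, \eqref{eq:genus17-xy} has degree $4$ in $y^7$, because the correct $x^2$ is a degree-$4$ rational function of $\upsilon$. Removing extraneous factors cannot bridge that gap, so you would obtain a different plane curve. Use instead
\[
\upsilon x^2=4(\upsilon-1)(\upsilon^3-8\upsilon^2+5\upsilon+1),
\]
or, after substituting $\upsilon-1=t\upsilon^3$, the paper's form $x^2=4t\upsilon^2(\upsilon^3-8\upsilon^2+5\upsilon+1)$. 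Its resultant with the cubic gives \eqref{eq:genus17-xy} up to a power of $t$. For the converse, reducing the quartic modulo $t\upsilon^3=\upsilon-1$ gives
\[
(4+52t)\upsilon^2-\bigl(40+(16+x^2)t\bigr)\upsilon+36-4t=0.
\]
One more division against $t\upsilon^3-\upsilon+1$ then yields a linear equation, hence $\upsilon\in\C(x,y)$ as in the paper. Note also that your ``cubic $(x^2-4)\upsilon^3+\cdots$'' matches neither your relation nor the correct one.
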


\begin{proof}
Set $t=y^7$.  Since $y=\psi/\u$, we have
$t={\psi^7}/{\u^7}
   ={\u-1}/{\u^3}$,
and hence
\begin{equation}\label{eq:g17-u1}
 t\u^3-\u+1=0.
\end{equation}
On the other hand, \eqref{eq:Delta-normalization} gives
$x^2
 =
 4\,(\u-1)
 (\u^3-8\u^2+5\u+1)/\u$,
so that
\begin{equation}\label{eq:g17-u2}
 x^2=
 4t\u^2
 (\u^3-8\u^2+5\u+1).
\end{equation}
Eliminating $\u$ from
\eqref{eq:g17-u1} and \eqref{eq:g17-u2}
gives \eqref{eq:genus17-xy}.

Conversely, the Euclidean algorithm applied to
\eqref{eq:g17-u1} and \eqref{eq:g17-u2},
regarded as polynomials in $\u$, gives
\[
 \u=
 -\frac{
 4(t^3x^2+16t^3-9t^2x^2+572t^2-256t+4)}
 {t^3x^4+32t^3x^2+464t^3
 +80t^2x^2-3280t^2+1040t-16}.
\]
Thus $\u\in\C(x,y)$.  Since
\(
 \psi=\u y\),
\( \Delta={x(\u-1)}/{\u},
\)
we obtain
\(
 \C(\psi,\u,\Delta)=\C(x,y)
\).

The cyclic degree-$7$ extension
\(
 \psi^7=\u^4(\u-1)
\)
of $\C(\u)$ is ramified precisely at
$\u=0,1,\infty$, with ramification index $7$.
The quadratic extension defined by
\eqref{eq:Delta-normalization} is ramified at
$0,1,\infty$ and at the three roots of
\[
 \u^3-8\u^2+5\u+1,
\]
since these are exactly the points where the rational function on the
right-hand side of \eqref{eq:Delta-normalization} has odd valuation.
Since $2$ and $7$ are coprime, the two extensions are linearly
disjoint over $\C(\u)$, and their compositum has degree $14$.
At $0,1,\infty$ the ramification index in the compositum is $14$,
whereas at the other three branch points it is $2$.  Hence
\[
 2g-2
 =
 14(-2)
 +3\cdot14\Bigl(1-\frac1{14}\Bigr)
 +3\cdot14\Bigl(1-\frac12\Bigr) 
 =32.
\]
Thus $g=17$.
\end{proof}

\begin{remark}\label{rem:g17-g19}
Propositions~\ref{prop:genus19-model} and
\ref{prop:genus17-model} give two-generator descriptions of the two
largest proper intermediate function fields considered above:
\[
 \A_0(\G_0(2)\cap\G(7))=\C(\psi,g_1),
 \qquad
 \A_0(\Gtwo\cap\G(7))=\C(x,y).
\]
In particular, the genera $19$ and $17$ are obtained without analyzing
the singularities of the corresponding plane models; they follow
directly from the ramification of the natural coverings over the
$\u$-line.
\end{remark}

We next record the lower-genus quotients, for which particularly simple
models are available.

\begin{proposition}\label{prop:C4}
The curve attached to $\G(2)\cap\G_1(7)$ has function field
\[
 \A_0(\G(2)\cap\G_1(7))
 =\C(\u,g_1,g_2)=\C(g_1,g_2),
\]
and the plane model
\begin{multline}\label{eq:C4}
 (Z_1+Z_2)(Z_1+Z_2+2)Z_1^2Z_2^2 \\
 +2(Z_1+Z_2+1)(Z_1+Z_2+2)Z_1Z_2
 -(Z_1+Z_2)^2(Z_1+Z_2+1)^2=0.
\end{multline}
It has genus $4$.
\end{proposition}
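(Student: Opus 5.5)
We argue in three parts. First we identify the field with the Galois theory already set up, then we derive the plane model by elimination, and finally we compute the genus by Riemann--Hurwitz over the $\u$-line.

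\emph{Identification of the field.} By Lemma~\ref{lem:upsilon-hauptmodul}, $\A_0(\G_1(7))=\C(\u)$. By Theorem~\ref{thm:X14}, $\A_0(\G(14))=\C(\u,\psi,g_1,g_2)$ is Galois over $\C(\u)$. Its group is $\G_1(7)/\pm\G(14)$, which is the product of the cyclic group of order $7$ acting on $\psi$ with the group $S_3\simeq\mathrm{SL}_2(\F_2)$ permuting the $g_i$. The subgroup $\G(2)\cap\G_1(7)$ is the kernel of the map to $S_3$, that is, the stabilizer of all three roots. Hence its fixed field is the splitting field $\C(\u,g_1,g_2)$ of $P_\u$ over $\C(\u)$, which has degree $6$.

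\emph{Eliminating $\u$.} To see that $\u\in\C(g_1,g_2)$, we use the symmetric relations of Theorem~\ref{thm:intro-theta}. From \eqref{eq:sym1} we get $g_3=-1-g_1-g_2$. Substituting into \eqref{eq:sym2}, the relation $e_2=-\u+1/\u$ becomes a quadratic equation in $\u$ with coefficients in $\C(g_1,g_2)$. The relation \eqref{eq:sym3}, $e_3=-\u-1/\u+2$, gives a second quadratic. Adding them gives $e_2+e_3=2-2\u$, so
\[
 \u=1-\tfrac12(e_2+e_3)\in\C(g_1,g_2).
\]
Substituting this expression into $e_3=-(\u-1)^2/\u$ gives the single relation $\u e_3+(\u-1)^2=0$. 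Writing $Z_1=g_1$, $Z_2=g_2$ and $s=Z_1+Z_2$, we have
\[
 e_2=Z_1Z_2-s(s+1),\qquad e_3=-Z_1Z_2(s+1).
\]
Clearing denominators should then reproduce \eqref{eq:C4}. If the polynomial comes out with an extraneous factor, we discard it by checking the $q$-expansions of $g_1,g_2$ from the previous section.

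\emph{Genus.} The genus follows from Riemann--Hurwitz for the $S_3$-cover of the $\u$-line. The proof of Proposition~\ref{prop:genus19-model} shows that the cubic cover $\C(\u,g_1)/\C(\u)$ has six branch values, namely $0,1,\infty$ and the three roots of $\u^3-8\u^2+5\u+1$, each of type $(2,1)$. In the Galois closure of degree $6$, a transposition therefore acts at each of these six points, giving ramification index $2$ and three points above each. Hence
\[
 2g-2=6(-2)+6\cdot 3\cdot(2-1)=6,
\]
so $g=4$. This is consistent with the quadratic subfield $\C(\u,\delta)$ of genus $2$, which is branched at the same six points.

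The main obstacle is making sure the elimination produces exactly \eqref{eq:C4} and that no extra factor is needed. Exhibiting $\u$ as a rational function of $g_1,g_2$, as above, settles the equality of fields. The irreducibility of \eqref{eq:C4} then follows from the degree count, since $[\C(g_1,g_2):\C(\u)]=6$.
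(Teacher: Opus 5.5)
Your proposal is correct. For the field identification and the elimination of $\u$ it follows the paper. Substituting $\u=1-\tfrac12(e_2+e_3)$ into $\u e_3+(\u-1)^2=0$ gives $e_2^2-e_3^2+4e_3=0$, which is exactly the paper's relation $(2-s_2-s_3)(2+s_2-s_3)=4$. With $p=Z_1Z_2$, $e_2=p-s(s+1)$ and $e_3=-p(s+1)$, this expands to $-\bigl(s(s+2)p^2+2(s+1)(s+2)p-s^2(s+1)^2\bigr)=0$, which is \eqref{eq:C4} on the nose. So no extraneous factor appears, and your $q$-expansion fallback is not needed.

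Your Galois-theoretic identification of the fixed field is more explicit than the paper's appeal to the table. The step ``kernel of reduction mod $2$ equals the stabilizer of the roots'' holds because $1\times C_7$ is the only normal subgroup of $\G_1(7)/\G(14)\simeq S_3\times C_7$ with quotient $S_3$. Since $-I\notin\G_1(7)$, write $\G_1(7)/\G(14)$ rather than $\G_1(7)/\pm\G(14)$.

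The genuine difference is the genus computation.
\begin{itemize}
\item The paper applies $g=1+\mu/12-\nu_\infty/2$ to the torsion-free group, with index $144$ and $18$ cusps. The cusps are counted as orbits of the order-$7$ unipotent group on primitive vectors mod $14$.
\item You apply Riemann--Hurwitz to the $S_3$-cover $\C(g_1,g_2)/\C(\u)$. You use the six $(2,1)$ branch values from Proposition~\ref{prop:genus19-model}, each of which is forced to have transposition inertia.
\end{itemize}
Your route stays inside the explicit model and reuses ramification data already in hand, in the spirit of Remark~\ref{rem:g17-g19}. It also recovers the cusp count: the six branch values are exactly the cusps of $X_1(7)$, and each splits into three. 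The paper's route does not depend on the ramification analysis of the cubic at all, so it gives an independent check, at the cost of the orbit count.

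One minor point concerns irreducibility of \eqref{eq:C4}. It follows most directly from the fact that the left side has degree $4$ in $Z_2$, matching $[\C(g_1,g_2):\C(g_1)]=4$. This uses $[\C(\u,g_1):\C(g_1)]=2$, which holds because $\C(\u,g_1)$ has genus $1$. One also needs that the polynomial has no factor in $Z_1$ alone. The bare fact $[\C(g_1,g_2):\C(\u)]=6$ does not give this by itself. The paper leaves this point unaddressed.
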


\begin{proof}
By the fixed-field description in Section~6.2,
\[
 A_0(\Gamma(2)\cap\Gamma_1(7))
   =\mathbf C(\upsilon,g_1,g_2).
\]
Put $Z_1=g_1$, $Z_2=g_2$, $Z_3=-1-Z_1-Z_2$.
By Theorem~\ref{thm:intro-theta}, if
$s_2=Z_1Z_2+Z_2Z_3+Z_3Z_1$,
$s_3=Z_1Z_2Z_3$,
then $s_2=-\upsilon+1/\upsilon$,
$s_3=-\upsilon-1/\upsilon+2$.
Hence we have
$\upsilon=({2-s_2-s_3})/{2}\in\mathbf C(Z_1,Z_2)$,
and therefore $\C(\upsilon,g_1,g_2)=\C(g_1,g_2)$.

Eliminating $\upsilon$ from the two displayed equations for $s_2$ and
$s_3$, or equivalently using $(2-s_2-s_3)(2+s_2-s_3)=4$,
and then substituting $Z_3=-1-Z_1-Z_2$, gives
\begin{multline*}
 (Z_1+Z_2)(Z_1+Z_2+2)Z_1^2Z_2^2
\\
 +2(Z_1+Z_2+1)(Z_1+Z_2+2)Z_1Z_2
 -(Z_1+Z_2)^2(Z_1+Z_2+1)^2=0,
\end{multline*}
which is \eqref{eq:C4}.

The group is torsion-free and has index
$[\mathrm{PSL}_2(\mathbf Z):\overline{\Gamma(2)\cap\Gamma_1(7)}]
=144$.
It has $18$ cusps.  Indeed, modulo $14$ the quotient over
$\Gamma(14)$ is the order-$7$ unipotent subgroup of
$\mathrm{SL}_2(\mathbf F_7)$; its action on primitive vectors modulo
$14$, up to sign, has $3(3+3)=18$ orbits.
Hence $g=1+{144}/{12}-{18}/{2}=4$.
\end{proof}

\begin{proposition}\label{prop:C2}
The curve attached to $\Gtwo\cap\G_1(7)$ has model
\begin{equation}\label{eq:C2}
 C_2: \ 
 W^2=X(X-1)(X^3-8X^2+5X+1),
\end{equation}
and genus $2$.
\end{proposition}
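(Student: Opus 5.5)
The plan is to identify $\A_0(\Gtwo\cap\G_1(7))$ as the quadratic extension $\C(\u,\delta)$ of $\A_0(\G_1(7))=\C(\u)$, and then read off both the model and the genus. By Lemma~\ref{lem:upsilon-hauptmodul}, $\A_0(\G_1(7))=\C(\u)$. Reduction modulo $2$ gives $\G_1(7)/(\G(2)\cap\G_1(7))\simeq\mathrm{SL}_2(\F_2)\simeq S_3$. This is the same $S_3$ that acts on the roots $g_1,g_2,g_3$ of $P_\u(T)$; indeed $\C(\u,g_1,g_2)$ is the fixed field of $\G(2)\cap\G_1(7)$ by Proposition~\ref{prop:C4}. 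The group $\Gtwo\cap\G_1(7)$ is the preimage of the index-$2$ alternating subgroup $A_3$, so its fixed field is the unique quadratic subextension of $\C(\u,g_1,g_2)/\C(\u)$. That subextension is generated by a square root of the discriminant of $P_\u(T)$.

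The first step is to observe that $\delta$ in \eqref{eq:delta} is such a square root. It is a rational function of $\u$ times $(g_1-g_2)(g_2-g_3)(g_3-g_1)$, so it is fixed by the even permutations and changes sign under the transpositions. The one thing to check is that $\delta\notin\C(\u)$, i.e.\ that the Galois group over $\C(\u)$ really is the full $S_3$. This follows from \eqref{eq:delta-square}: the right-hand side $X(X-1)(X^3-8X^2+5X+1)$ is squarefree. Here I would verify directly that $0$ and $1$ are not roots of the cubic, and that the cubic's discriminant is nonzero (it is $7^4$). Hence $\A_0(\Gtwo\cap\G_1(7))=\C(\u,\delta)$, and \eqref{eq:delta-square} with $X=\u$, $W=\delta$ gives \eqref{eq:C2}.

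The identity \eqref{eq:delta-square} itself is a routine computation from Theorem~\ref{thm:intro-theta}. The discriminant of $P_\u$ is a symmetric function of the roots, so I would compute it from the coefficients and multiply by $\u^4/(4(\u-1)^2)$. I expect this to be the only real computation. The step needing most care is the group-theoretic identification, namely that the $S_3$ acting on the roots is compatible with reduction mod $2$. Theorem~\ref{thm:X14} already provides this over $\G(7)$, and the same argument over $\G_1(7)$ applies verbatim because $\G_1(7)$ surjects onto $\mathrm{SL}_2(\F_2)$.

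Finally, the genus. The right-hand side of \eqref{eq:C2} is a squarefree polynomial of degree $5$, so the hyperelliptic curve has six branch points: the five roots and $\infty$. Riemann--Hurwitz then gives $2g-2=2(-2)+6$, so $g=2$. As a consistency check, the index of $\Gtwo\cap\G_1(7)$ in $\mathrm{PSL}_2(\Z)$ is $72$, and the genus formula with its cusp count should also give $2$.
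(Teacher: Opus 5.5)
Your proposal is correct and follows essentially the paper's route: identify $\A_0(\Gtwo\cap\G_1(7))$ with the quadratic subfield $\C(\u,\delta)$ of the $S_3$-extension, use \eqref{eq:delta-square}, and read off genus $2$ from the squarefree quintic. You go further than the paper by deriving \eqref{eq:delta-square} from the discriminant of $P_\u$ and checking squarefreeness (the cubic has discriminant $7^4$), both of which the paper only asserts. One slip is in your optional consistency check: since $[\Gtwo:\G(2)]=3$, the index of $\Gtwo\cap\G_1(7)$ in $\mathrm{PSL}_2(\Z)$ is $144/3=48$, not $72$; with no elliptic points and $6$ cusps, the genus formula then gives $1+48/12-6/2=2$.
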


\begin{proof}
By Section~6.2, the fixed field corresponding to
$\Gamma^2\cap\Gamma_1(7)$ is
\[
 A_0(\Gamma^2\cap\Gamma_1(7))
   =\mathbf C(\upsilon,\delta).
\]
From \eqref{eq:delta-square} we have
\[
 \delta^2
 =\upsilon(\upsilon-1)
   \bigl(\upsilon^3-8\upsilon^2+5\upsilon+1\bigr).
\]
Thus, on putting
$X=\upsilon$, $W=\delta$,
we obtain the model
\[
 W^2=X(X-1)(X^3-8X^2+5X+1).
\]
The polynomial on the right is square-free of degree $5$, so its smooth
projective model is hyperelliptic of genus
$(5-1)/2=2$.\end{proof}

\begin{proposition}\label{prop:X114}
The modular curve $X_1(14)$ is generated by
$\u$ and $g_1$.
After
\[
 X_0=-\frac1{g_1},
 \quad
 Y_0=
 \frac{(g_1-1)(-g_1^2+\u-g_1-1)}{g_1^3},
\]
one obtains the minimal Weierstrass equation
\begin{equation}\label{eq:X114}
 Y_0^2+X_0Y_0+Y_0=X_0^3-X_0.
\end{equation}
\end{proposition}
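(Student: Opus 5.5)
The plan has two parts: first show that $\C(\u,g_1)$ is exactly $\A_0(\G_1(14))$, then verify that the stated change of variables is birational and turns the cubic relation between $\u$ and $g_1$ into \eqref{eq:X114}.

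\emph{Field identification.} By Lemma~\ref{lem:upsilon-hauptmodul} we have $\A_0(\G_1(7))=\C(\u)$. By Theorem~\ref{thm:intro-theta}, $g_1$ is a root of $P_\u(T)$, so $[\C(\u,g_1):\C(\u)]\le 3$.
\begin{itemize}
\item \emph{Invariance.} The $q$-expansion of $g_1$ in \S\ref{sec:2division} has integral exponents, so $g_1(\tau+1)=g_1(\tau)$. Also $g_1=\phi_1(\tau)^2/\phi_1(2\tau)$ is modular for $\G_0(2)\cap\G(7)$, by the conjugation argument with $\gamma^{(2)}$ in \S\ref{sec:theta-cubic}. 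The group $\G_0(2)\cap\G_1(7)$ is generated by $\G_0(2)\cap\G(7)$ and $\sltwo(1,1;0,1)$. Hence $g_1\in\A_0(\G_1(14))$.
\item \emph{Degree.} The index $[\G_1(7):\G_1(14)]$, taken modulo $\pm1$, is $3$. Moreover $g_1\notin\C(\u)$: otherwise $P_\u(T)$ would have a root in $\C(\u)$, and the ramification of $P_\u$ at the six branch values computed in Proposition~\ref{prop:genus19-model} forbids this. Therefore $\C(\u,g_1)=\A_0(\G_1(14))$.
\item \emph{Genus.} The field $K_1=\C(\u,g_1)$ has genus $1$, as shown in the proof of Proposition~\ref{prop:genus19-model}.
\end{itemize}

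\emph{Birationality.} The map $(\u,g_1)\mapsto(X_0,Y_0)$ has an explicit inverse:
\[
g_1=-1/X_0,\qquad \u=\frac{g_1^3Y_0}{g_1-1}+g_1^2+g_1+1,
\]
using that $Y_0$ is linear in $\u$. Hence $\C(X_0,Y_0)=\C(\u,g_1)$.

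\emph{Deriving the equation.} Substitute these expressions into the cubic \eqref{eq:g19-cubic},
\[
(1-g)\u^2+(g^3+g^2-2)\u+(g+1)=0 .
\]
Clear the denominator, which is a power of $X_0$ times a power of $(1+X_0)$ coming from $g_1-1=-(1+X_0)/X_0$. After removing the nonvanishing factors, what should remain is
\[
Y_0^2+X_0Y_0+Y_0-X_0^3+X_0=0 .
\]
This step is the main bookkeeping obstacle. I would do it symbolically, tracking the exact power of $X_0$ and of $(1+X_0)$ that must be divided out. If stray factors survive, I would first adjust the normalization of $Y_0$ by a sign or translation and check against the $q$-expansions of $g_1$ and $\u$.

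\emph{Minimality.} For $a_1=a_3=1$, $a_2=0$, $a_4=-1$, $a_6=0$, compute $b_2,b_4,b_6,b_8$ and then $\Delta$. The result should be a small integer supported only at $2$ and $7$, as expected for conductor $14$. No prime divides $\Delta$ to the twelfth power, so the model is minimal. One can also check $j$ and $\Delta$ against the known curve of conductor $14$ as a consistency test.
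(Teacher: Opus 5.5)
Your proposal follows the paper's proof: identify $\A_0(\Gamma_1(14))$ with $\C(\upsilon,g_1)$, substitute $g_1=-1/X_0$ and the inverse $\upsilon=(X_0^3+Y_0+1)/(X_0^2(X_0+1))$ (equivalent to your formula) into the cubic, and read off the discriminant, which is $\Delta=-28$. The bookkeeping you were unsure about works out cleanly. The cubic $(1-g)\upsilon^2+(g^3+g^2-2)\upsilon+(g+1)$ becomes exactly $(Y_0^2+X_0Y_0+Y_0-X_0^3+X_0)/(X_0^5(X_0+1))$, so no renormalization of $Y_0$ is needed. Your invariance-plus-index argument for $g_1\in\A_0(\Gamma_1(14))$ is more explicit than the paper, which simply cites the fixed-field description.

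One sub-step does not work as written: the ramification data cannot rule out $g_1\in\C(\upsilon)$. Suppose $P_\upsilon$ factored as a linear factor times a quadratic one. The discriminant of the quadratic factor would have odd order at the same six points. The resulting disconnected cover (a section plus a genus-$2$ double cover) would then have exactly the same fibre type $(2,1)$ over each branch value. Moreover, the genus computation in Proposition~\ref{prop:genus19-model} takes the degree $3$ for granted, so citing it is circular.

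Use the rational root test instead. A root in $\C(\upsilon)$ of
$\upsilon T^3+\upsilon T^2-(\upsilon^2-1)T+(\upsilon-1)^2$
must have the form $p$ or $p/\upsilon$ with $p\in\C[\upsilon]$, and with $p(0)\neq0$ in the second case.
\begin{itemize}
\item A nonconstant polynomial $p$ fails for degree reasons: the term $\upsilon p^3$ dominates.
\item A constant $c$ would require $c=1$ from the $\upsilon^2$ coefficient and $c=-1$ from the constant term, which is impossible.
\item For $p/\upsilon$, multiply the equation by $\upsilon^2$ and set $\upsilon=0$; this leaves $p(0)^3=0$, a contradiction.
\end{itemize}
With this fix, your degree-$3$ index argument gives $\C(\upsilon,g_1)=\A_0(\Gamma_1(14))$ as claimed.
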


\begin{proof}
Again by Section~6.2,
\[
 A_0(\Gamma_1(14))=\mathbf C(\upsilon,g_1).
\]
Writing $g=g_1$, equation \eqref{eq:g19-cubic} is
\[
 (1-g)\upsilon^2+(g^3+g^2-2)\upsilon+(g+1)=0.
\]
Set $X_0=-1/g$, $Y_0={(g-1)(-g^2+\upsilon-g-1)}/{g^3}$.
A direct substitution transforms the preceding equation into
\[
 Y_0^2+X_0Y_0+Y_0=X_0^3-X_0.
\]
Conversely, we have 
$g=-1/{X_0}$,
 $\upsilon={X_0^3+Y_0+1}/({X_0^2(X_0+1)})$,
so the transformation is birational.  Hence this equation has function
field $\mathbf C(\upsilon,g_1)$ and therefore gives a Weierstrass model
of $X_1(14)$.

Its discriminant is $-28$; in particular this integral Weierstrass
equation is minimal (equivalently, this follows immediately from the
standard minimality criterion, or from Tate's algorithm).  Thus
\[
 Y_0^2+X_0Y_0+Y_0=X_0^3-X_0
\]
is a minimal Weierstrass equation for $X_1(14)$.
\end{proof}

\begin{remark}
The involution $(Z_1,Z_2)\mapsto(Z_2,Z_1)$ on \eqref{eq:C4}
gives the elliptic quotient $X_1(14)$.
The cyclic permutation
\[
 (Z_1,Z_2)\longmapsto (Z_2,-1-Z_1-Z_2)
\]
gives the degree-$3$ quotient \eqref{eq:C2}.
These maps make the subgroup lattice of $S_3$ visible geometrically.
Thus the five models above exhibit two complementary features of the
intermediate-field structure: the genus-$19$ and genus-$17$ curves are
naturally understood through ramified coverings of the $\u$-line,
whereas the genus-$4$, genus-$2$, and genus-$1$ quotients display the
$S_3$-symmetry particularly explicitly.
\end{remark}

\subsection{Remark on $\G_{0}(7)$}

The generator of $\A_0(\Gamma(1))$ is the famous elliptic modular $j$-function
\[ 
j(\tau)=q^{-1}+744+196884\thin q+21493760\thin q^2+\cdots. 
\] 
The generators of $\A_{0}(\G_{0}(7))$ and $\A_{0}(\G_{0}(14))$ are more or less well known. Let 
\begin{align*}
j_7(\tau)&:={\eta(\tau)^4}/{\eta(7\tau)^4}
=q^{-1} - 4 + 2q + 8 q^2 - 5 q^3 - 4 q^4 - 10 q^5 + 12 q^6 - \cdots,\\
j_{14}(\tau)&:=\frac{\eta(2\tau)\eta(7\tau)^7}{\eta(\tau)\eta(14\tau)^7}
=q^{-2}+q^{-1}+1+2q+2\thin q^2+3\thin q^3 + 4 q^4 - 2 q^5 - q^6 + \cdots,
\end{align*}
where $\eta(\tau)=q^{1/24}\prod_{n=1}^\infty (1-q^n)$ is the Dedekind eta function.  Then we have $\A_{0}(\G_{0}(7))=\C(j_{7})$, and $\A_{0}(\G_{0}(14))=\C(j_{7},j_{14})$. See for instance~\cite{Lee-Park} and~\cite{Duke}.
The relation between $j_7(\tau)$ and $ j_{14}(\tau)$ is also classical:
\begin{equation}\label{eq:j7-j14}
j_{14}^3 - (j_7^2 + 9 j_7 + 17) j_{14}^2 + 16 (j_7 + 5) j_{14} - 64 = 0.
\end{equation}
It is easy to see that this equation defines an elliptic curve. 
It is known that the degree-$8$ cover $X_{0}(7)/X(1)$ is given by 
\begin{equation}\label{eq:j1-j7}
j = \frac{(j_{7}^{2}+13j_{7}+49)(j_{7}^{2}+245j_{7}+2401)^{3}}{j_{7}^{7}},
\end{equation}
and the degree-$3$ cover $X_{1}(7)/X_{0}(7)$ is given by
\begin{equation}\label{eq:j7-u}
j_{7}= \u + \frac{1}{1-\u} + \frac{\u-1}{\u} -8 
=\frac{\u^{3} - 8\u^{2} + 5\u +1}{\u(\u-1)}.
\end{equation}
Also, the degree-$3$ cover $X_{1}(14)/X_{0}(14)$ is given by
\begin{equation}\label{eq:j14-g1}
j_{14}= \frac{8g_1}{(g_1-1)(g_1+1)^2}.
\end{equation}

% ----------------------------------------------------------------
\section{A complementary level-$49$ picture}
\label{sec:level49}
% ----------------------------------------------------------------

This section is not used in the proof of the level-$14$ results.
We include it to record a complementary quotient construction at level
$49$, in which the same septic theta functions give explicit
coordinates on the CM elliptic curve $X_0(49)$.

By Lemma~\ref{lem:upsilon-hauptmodul}, $\u$ is a Hauptmodul for
$\G_1(7)$.  The corresponding universal elliptic curve, with its
distinguished point of order $7$, is
\begin{equation}\label{eq:X17-univ}
 y^2-(\u^2-\u-1)xy-\u^2(\u-1)y
 =
 x^3-\u^2(\u-1)x^2,
\end{equation}
with distinguished point $(0,0)$.

The quotient construction associated with a full level-$7$ structure
gives a moduli-theoretic identification
\begin{equation}\label{eq:X7-X149}
 X(7)\simeq
 X\bigl(\G_1(7)\cap\G_0(49)\bigr).
\end{equation}
At the level of modular functions the two groups are conjugate in
$\mathrm{SL}_2(\mathbb R)$, and the identification corresponds to
$\tau\mapsto7\tau$.  Thus
\[
 \A_0(\G(7))=\C(\psi(\tau),\u(\tau)),
\]
whereas
\[
 \A_0(\G_1(7)\cap\G_0(49))
 =
 \C(\psi(7\tau),\u(7\tau)).
\]
We now describe explicitly the quotient from
$X(\G_1(7)\cap\G_0(49))$ to $X_0(49)$.

\begin{proposition}\label{prop:X049}
Put
%\begin{equation}\label{eq:p1p2}
$p_1(\tau)=-\phi_1(7\tau)$,
% \quad
 $p_2(\tau)=\phi_2(7\tau)$,
%\end{equation}
and define
\begin{equation}\label{eq:X049-XY}
 X=
 1+p_1+p_2-\frac1{p_1p_2},
 \quad
 Y=
 \frac{(p_1+1)(p_2+1)(p_1p_2-1)}{p_1p_2}.
\end{equation}
Then
\(
 X,Y\in\A_0(\G_0(49))
\),
and they satisfy
\begin{equation}\label{eq:X049-minimal}
 Y^2+XY=X^3-X^2-2X-1.
\end{equation}
Furthermore,
\[
 \A_0(\G_0(49))=\C(X,Y).
\]
Thus \eqref{eq:X049-minimal} is a minimal Weierstrass model of
$X_0(49)$.
\end{proposition}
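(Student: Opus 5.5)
Throughout, $p_1(\tau)=-\phi_1(7\tau)$ and $p_2(\tau)=\phi_2(7\tau)$ are modular for the conjugate group $\G_1(7)\cap\G_0(49)$. The plan has three steps: show that $X$ and $Y$ are invariant under the deck group of $X(\G_1(7)\cap\G_0(49))\to X_0(49)$, verify the cubic relation, and then compare degrees.

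\textbf{Step 1: invariance under the deck group.} This cover is Galois with group $\G_0(49)/(\G_1(7)\cap\G_0(49))\simeq(\Z/7\Z)^\times/\{\pm1\}$, cyclic of order $3$. Under $\tau\mapsto7\tau$ it corresponds to the subgroup of $\mathrm{SL}_2(\Z)/\G(7)$ of diagonal matrices $\mathrm{diag}(d,d^{-1})$, with $d\in\{1,2,4\}$ modulo $\pm1$. From \eqref{eq:theta-k} and the standard transformation law for theta constants with rational characteristics, such a diagonal element permutes $a_1,a_2,a_3$ up to signs and a common factor. Indeed, $\theta_k\mapsto\pm\theta_{dk}$, which gives the cyclic action $a_1\mapsto a_2\mapsto a_4=-a_3\mapsto\cdots$ up to sign. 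This is the familiar order-$3$ symmetry $(a_1,a_2,a_3)\mapsto(a_2,a_3,a_1)$ (with signs) of the Klein quartic \eqref{eq:Klein-a}.

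Through $\phi_1=-a_3/a_2$ and $\phi_2=-a_2/a_1$, this induces an explicit birational map $\sigma$ of order $3$ on $(p_1,p_2)$. I expect it to be $\sigma(p_1,p_2)=(-1/(p_1p_2),\,p_1)$ up to sign conventions, which I will fix by checking $q$-expansions. I then verify directly that $X$ and $Y$ in \eqref{eq:X049-XY} are $\sigma$-invariant. For $X$ the check is the cyclic symmetry of the three terms $p_1$, $p_2$, $-1/(p_1p_2)$. For $Y$ it is the cyclic symmetry of $(p_1+1)(p_2+1)(1-1/(p_1p_2))$, suitably arranged. Invariance gives $X,Y\in\A_0(\G_0(49))$. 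Determining the correct signs in this action is the step I expect to be the main obstacle.

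\textbf{Step 2: the Weierstrass relation.} I substitute \eqref{eq:X049-XY} into \eqref{eq:X049-minimal} and clear denominators. This gives a polynomial in $p_1,p_2$, which I reduce modulo the Klein relation \eqref{eq:phi-klein} evaluated at $7\tau$, with the sign of $p_1$ adjusted. As a safeguard I also compare $q$-expansions, noting that $X=q^{-2}+\cdots$ and $Y=q^{-3}+\cdots$.

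\textbf{Step 3: generation of the function field.} The poles of $X$ and $Y$ lie only at cusps. Since $X_0(49)$ has genus $1$, a relation of the form \eqref{eq:X049-minimal} with $\ord_\infty X=-2$, $\ord_\infty Y=-3$ and no other poles shows that $(X,Y)$ is a degree-one map, provided $X$ has degree $2$ on $X_0(49)$.

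To confirm this degree, I check that $X$ is holomorphic at the cusps of $X_0(49)$ other than $\infty$, using the product formulas \eqref{eq:ak-product} transformed by $\mathrm{SL}_2(\Z)$. Alternatively, I use the count $[\C(p_1,p_2):\C(X)]=6=3\cdot2$.

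Hence $\C(X,Y)=\A_0(\G_0(49))$. The discriminant of \eqref{eq:X049-minimal} is $-343$. Since $7^{12}$ does not divide it, the model is minimal.
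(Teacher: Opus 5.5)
Your proposal is correct, and its first two steps are the paper's argument. The paper takes the explicit element $\sltwo(25,-1;-49,2)\in\G_0(49)$, which after $z=7\tau$ reduces to $\mathrm{diag}(4,2)$ modulo $7$. From the theta transformation law it reads off $(A_1,A_2,A_3)\mapsto(A_3,-A_1,A_2)$, i.e.\ $\sigma(p_1,p_2)=(p_2,-1/(p_1p_2))$. Your guess $(-1/(p_1p_2),p_1)$ is exactly $\sigma^{-1}$, so it generates the same group. Your invariance check via $r_1r_2r_3=-1$ and your reduction modulo $p_1^3p_2^2-p_1p_2^3=1$ are as in the paper.

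On the sign issue you flag: $q$-expansions at $\infty$ cannot detect the action of a non-parabolic element, but you do not need them. Once you know the action is a signed cyclic permutation, it must carry $A_1^3A_2-A_2^3A_3-A_3^3A_1$ to a scalar multiple of itself. This forces $(A_1,A_2,A_3)\mapsto(A_2,-A_3,-A_1)$ up to a common factor, which gives precisely your $\sigma$.

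Where you genuinely differ is the generation step. The paper writes $r_1,r_2,r_3$ as the roots of $T^3-(X-1)T^2+(Y-X+1)T+1$. It then argues that the generic fibre of $(p_1,p_2)\mapsto(X,Y)$ is the cyclic orbit, so the map has degree $3=[G:\langle -I,H\rangle]$. This tacitly uses that reversed orderings such as $(r_2,r_1)$ do not lie on the Klein curve.

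You instead show $\deg X=2$ on $X_0(49)$ and use that $\ord_\infty Y=-3$ is odd. Hence $Y\notin\C(X)$ and $\C(X,Y)=\A_0(\G_0(49))$. This argument needs neither the genus of $X_0(49)$ nor any fibre analysis.

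Of your two ways to get $\deg X=2$, the second is much cheaper than examining the other seven cusps. Put $P_1=(1:0:0)$, $P_2=(0:1:0)$, $P_3=(0:0:1)$ on the quartic. Then $\operatorname{div}A_1=3P_3+P_2$, $\operatorname{div}A_2=3P_1+P_3$ and $\operatorname{div}A_3=3P_2+P_1$. So $p_1=A_3/A_2$, $p_2=-A_2/A_1$ and $-1/(p_1p_2)=A_1/A_3$ have poles only at the $P_i$, and $X$ has a double pole at each. This gives degree $6$ on the quartic, hence $2$ on $X_0(49)$.

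The paper's route, in exchange, exhibits $\C(p_1,p_2)/\C(X,Y)$ explicitly as a cyclic cubic extension, in keeping with its Galois-theoretic viewpoint. Your discriminant computation $\Delta=-343=-7^3$ supplies the minimality that the paper only asserts. A small slip: $p_1=-q^{-1}+\cdots$, so $Y=-q^{-3}+\cdots$; only the order matters.
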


\begin{proof}
Let
\(
 H=\G_1(7)\cap\G_0(49), G=\G_0(49)
\).
Since $-I\in\SL_{2}(\Z)$ acts trivially on the upper half-plane, the effective
quotient is
\(
 G/\langle -I,H\rangle\simeq C_3.
\)
For example, we choose the matrix
\(
 \gamma=
 \sltwo(25,-1;-49,2)\in G
\).
After putting $z=7\tau$, its action becomes
\[
 z\longmapsto\frac{25z-7}{-7z+2},
\]
and hence is represented modulo $7$ by
\(
 \sltwo(4,0;0,2)
\),
an element of order $3$ in $\mathrm{PSL}_2(\F_7)$.

For notational clarity, put
\[
 A_i(\tau):=a_i(7\tau)\qquad (i=1,2,3).
\]
Thus $(A_1:A_2:A_3)$ lies on the Klein quartic
$A_1^3A_2-A_2^3A_3-A_3^3A_1=0$,
and
\[
 p_1=A_3/A_2,
 \quad
 p_2=-A_2/A_1.
\]
Applying the standard transformation formula for theta functions with
rational characteristics, recalled in
Appendix~\ref{sec:appendix-theta}, to the above matrix gives, up to a
common non-zero factor,
\begin{equation}\label{eq:C3-action-ai}
 (A_1,A_2,A_3)\longmapsto(A_3,-A_1,A_2).
\end{equation}
It follows that a generator $\sigma$ of the effective quotient acts by
\begin{equation}\label{eq:C3-action-p}
 \sigma(p_1,p_2)
 =
 \Bigl(p_2,-\frac1{p_1p_2}\Bigr).
\end{equation}
Thus
\(
 p_1, p_2, -\frac{1}{p_1p_2}
\)
are cyclically permuted.
So, let
\(
 r_1=p_1,
 r_2=p_2,
 r_3=-\frac1{p_1p_2}
\).
Then
\[
 X=1+r_1+r_2+r_3,
\]
and, since $r_1r_2r_3=-1$, we have
\[
 Y=(1+r_1)(1+r_2)(1+r_3).
\]
Hence $X$ and $Y$ are invariant under the quotient action and therefore
belong to $\A_0(G)$.

The Klein relation for $p_1,p_2$ is
\begin{equation}\label{eq:Klein-p1p2}
 p_1^3p_2^2-p_1p_2^3=1.
\end{equation}
Substituting \eqref{eq:X049-XY} into the left-hand side of
\eqref{eq:X049-minimal}, and using \eqref{eq:Klein-p1p2}, gives
\[
 Y^2+XY-X^3+X^2+2X+1=0.
\]
Thus $X$ and $Y$ satisfy \eqref{eq:X049-minimal}.

For completeness, let
\[
 s_1=r_1+r_2+r_3=X-1,\quad
 s_2=r_1r_2+r_2r_3+r_3r_1=Y-X+1.
\]
Then $r_1,r_2,r_3$ are the roots of
\begin{equation}\label{eq:cubic-r123}
 T^3-(X-1)T^2+(Y-X+1)T+1=0.
\end{equation}
For a generic point of the curve \eqref{eq:X049-minimal}, the three
points lying above it are precisely the cyclic orbit
\[
 (p_1,p_2),\quad
 \Bigl(p_2,-\frac1{p_1p_2}\Bigr),\quad
 \Bigl(-\frac1{p_1p_2},p_1\Bigr).
\]
Consequently the map has degree $3$, equal to
$[G:\langle -I,H\rangle]$, and hence
\[
 \A_0(G)=\C(X,Y). \qedhere
\]
\end{proof}

\begin{remark}\label{rem:X049-cubic}
Writing $r=p_1p_2$, one also obtains the cubic
\[
 r^3+(X-Y-1)r^2+(1-X)r-1=0.
\]
Its three roots are
\(
 p_1p_2,  -\frac1{p_1}, -\frac1{p_2}
\).
This gives another explicit manifestation of the cyclic degree-$3$
quotient from \eqref{eq:X7-X149} to $X_0(49)$.
\end{remark}

\begin{remark}
The elliptic curve \eqref{eq:X049-minimal} has complex multiplication by
\(
 \Z\Bigl[\frac{1+\sqrt{-7}}2\Bigr]
\).
Thus the same septic theta functions which describe the full
level-$7$ curve also give, after replacing $\tau$ by $7\tau$ and taking
the invariant combinations \eqref{eq:X049-XY}, explicit coordinates on
the CM elliptic curve $X_0(49)$.  In this sense
Proposition~\ref{prop:X049} gives a concrete theta-functional form of
the quotient
\[
 X\bigl(\G_1(7)\cap\G_0(49)\bigr)\longrightarrow X_0(49).
\]

The elliptic surface \eqref{eq:X17-univ} is a singular $K3$ surface
and is related to the corresponding Inose surface.  We do not pursue
this direction here.
\end{remark}

% ----------------------------------------------------------------
\appendix
\section{Theta identities for the degree-$7$ model}
\label{sec:appendix-theta}
% ----------------------------------------------------------------

We give enough details here to explain the origin of the fourteen
quadrics \eqref{eq:KV-14quadrics}.  The calculation is classical and
starts from Jacobi's four-term ``main identity'' \cite{Jacobi:Theta}.  In the notation
\eqref{eq:theta-k}, it may be written as
\begin{multline}\label{eq:Jacobi-main}
\theta_{7/2}(w)\theta_{7/2}(x)\theta_{7/2}(y)\theta_{7/2}(z)
 -\theta_0(w)\theta_0(x)\theta_0(y)\theta_0(z) 
 \\
 =\theta_{7/2}(w')\theta_{7/2}(x')\theta_{7/2}(y')\theta_{7/2}(z')
 -\theta_0(w')\theta_0(x')\theta_0(y')\theta_0(z'),
\end{multline}
where
\[
\begin{aligned}
 w'&=\tfrac12(w+x+y+z),&
 x'&=\tfrac12(w+x-y-z),\\
 y'&=\tfrac12(w-x+y-z),&
 z'&=\tfrac12(w-x-y+z).
\end{aligned}
\]
The following translation and parity formulas follow directly
from the definition \eqref{eq:theta-k}:
\begin{equation}\label{eq:theta-trans-app}
\setlength{\arraycolsep}{2pt}
\renewcommand{\arraystretch}{1.2}
\begin{array}{lcrl}
 \theta_k(z+1)&=&(-1)^{2k+1}&\theta_k(z),\\
 \theta_k(z+\tau)&=&-e^{-7\pi i\tau-14\pi iz}\,&\theta_k(z),\\
 \theta_k(z+\frac{\tau}{14})
   &=&-ie^{-\pi i\tau/28-\pi iz}\,&\theta_{k-\frac{1}{2}}(z),\\
	 \theta_k(z+\frac{\tau}{7})
   &=&-e^{-\pi i\tau/7-2\pi iz}\,&\theta_{k-1}(z),\\
 \theta_k(z+\frac{2\tau}{7})
   &=&e^{-4\pi i\tau/7-4\pi iz}\,&\theta_{k-2}(z),\\
 \theta_k(-z)&=&(-1)^{2k+1}&\theta_{-k}(z).
\end{array}
\end{equation}
In particular,
\[
 \theta_0(0)=0,
 \quad \theta_4(0)=-\theta_3(0),
 \quad \theta_5(0)=-\theta_2(0),
 \quad \theta_6(0)=-\theta_1(0).
\]
All suffixes below are understood modulo $7$.

\subsection{A basic addition formula}

Applying the translations in \eqref{eq:theta-trans-app} to
\eqref{eq:Jacobi-main}, and then taking the same linear combination as
in Jacobi's derivation of the addition theorem, gives
\begin{multline}\label{eq:basic-addition}
 \theta_3(0)\theta_4(x+y)\theta_4(y+z)\theta_4(z+x)
 \\
=\theta_1(x+y+z)\theta_0(x)\theta_0(y)\theta_0(z)
 -\theta_4(x+y+z)\theta_4(x)\theta_4(y)\theta_4(z).
\end{multline}
For reference, one way to obtain \eqref{eq:basic-addition} is as
follows.  Translate $w$ by $\tau/7$ in
\eqref{eq:Jacobi-main}; translate all four variables six times by
$\tau/14$; and in the resulting three identities put respectively
$w=-(x+y+z)$, $w=x+y+z$, and $w=-(x+y+z)$.  Taking one half of
$-$(the first)$+$(the second)$+$(the third) yields
\eqref{eq:basic-addition}.  This is the only four-variable identity we
shall need below.

Putting $z=-y$ in \eqref{eq:basic-addition} gives
\begin{equation}\label{eq:addition-I0}
 \theta_3(0)^2\theta_4(x-y)\theta_4(x+y)
 =\theta_0(x)\theta_1(x)\theta_0(y)^2
  -\theta_4(x)^2\theta_3(y)\theta_4(y).
\end{equation}
Translating both $x$ and $y$ successively by $\tau/7$ gives the cyclic
family
\begin{multline}\label{eq:addition-I}
\theta_3(0)^2\theta_4(x-y)\theta_{4-2i}(x+y)
 \\
 =\theta_{6-i}(x)\theta_{1-i}(x)\theta_{6-i}(y)^2
 -\theta_{4-i}(x)^2\theta_{3-i}(y)\theta_{4-i}(y),
\quad i\in\Z/7\Z.
\end{multline}
Translating only $x$ gives the six companion families obtained by
replacing the first suffix $4$ on the left successively by
$3,2,1,0,6,5$.  These are the first set of septic addition formulas.

A second family is obtained by translating $x,y,z$ simultaneously by
$\tau/7$ in \eqref{eq:basic-addition}.  After again putting $z=-y$ one
finds
\begin{equation}\label{eq:addition-II0}
\theta_2(0)\theta_3(0)\theta_2(x-y)\theta_2(x+y)
=\theta_1(x)\theta_3(x)\theta_3(y)\theta_4(y)
 -\theta_5(x)\theta_6(x)\theta_1(y)\theta_6(y).
\end{equation}
Hence, by translating $x$ and $y$ together,
\begin{multline}\label{eq:addition-II}
\theta_2(0)\theta_3(0)
 \theta_{2-i}(x-y)\theta_{2-2i}(x+y)
 \\
 =\theta_{1-i}(x)\theta_{3-i}(x)
 \theta_{3-i}(y)\theta_{4-i}(y)
 -\theta_{5-i}(x)\theta_{6-i}(x)
 \theta_{1-i}(y)\theta_{6-i}(y),
\quad i\in\Z/7\Z.
\end{multline}
Again, translating only $x$ gives the companion formulas.  Equations
\eqref{eq:addition-I} and \eqref{eq:addition-II}, together with these
cyclic translates, are the addition formulas needed for the projective
model.

\subsection{Derivation of the fourteen quadrics}

We now set $y=0$ in the preceding addition formulas.  Using the parity
relations at $0$ and deleting repetitions, the first family becomes
\begin{equation}\label{eq:quad-family-I}
 \theta_1(0)\theta_2(0)\theta_i(x)^2
 =\theta_2(0)^2\theta_{i+3}(x)\theta_{i-3}(x)
  -\theta_3(0)^2\theta_{i+2}(x)\theta_{i-2}(x),
 \qquad i\in\Z/7\Z,
\end{equation}
and the second becomes
\begin{equation}\label{eq:quad-family-II}
 \theta_2(0)\theta_3(0)\theta_i(x)^2
 =\theta_1(0)^2\theta_{i+3}(x)\theta_{i-3}(x)
  -\theta_3(0)^2\theta_{i+1}(x)\theta_{i-1}(x),
 \quad i\in\Z/7\Z.
\end{equation}
For example, the case $i=0$ reads
\begin{align*}
 \theta_1(0)\theta_2(0)\theta_0(x)^2
 &=\theta_2(0)^2\theta_3(x)\theta_4(x)
   -\theta_3(0)^2\theta_2(x)\theta_5(x),\\
 \theta_2(0)\theta_3(0)\theta_0(x)^2
 &=\theta_1(0)^2\theta_3(x)\theta_4(x)
   -\theta_3(0)^2\theta_1(x)\theta_6(x).
\end{align*}
Thus, on writing
\[
 a_j=\theta_j(0)\quad(j=1,2,3),
 \qquad x_i=\theta_i(x),
\]
Equations \eqref{eq:quad-family-I} and \eqref{eq:quad-family-II} are
precisely the fourteen quadrics \eqref{eq:KV-14quadrics}.
This proves the assertion used in Proposition~\ref{prop:theta-embedding}
and, in particular, explains the origin of equation
\eqref{eq:KV-14quadrics} rather than merely quoting it.

Finally, setting $x=0$ in the fourteen equations recovers
Klein's quartic relation 
\[
 \theta_1(0)^3\theta_2(0)
 =\theta_2(0)^3\theta_3(0)
  +\theta_3(0)^3\theta_1(0),
\]
which is exactly \eqref{eq:Klein-a}.

% ----------------------------------------------------------------
\section*{Acknowledgements}
% ----------------------------------------------------------------

In preparing this manuscript, the authors made use of ChatGPT (OpenAI) for assistance with language editing, 
exposition, and some computational checks. All mathematical arguments and computations were independently verified by the authors.

M.~Kaneko was supported by JSPS KAKENHI Grant Numbers JP21K18141
and JP21H04430.  M.~Kuwata was supported by the Chuo University Grant
for Special Research.

%\bibliography{KKlevel14.bib}
%\bibliographystyle{amsplain}

\providecommand{\bysame}{\leavevmode\hbox to3em{\hrulefill}\thinspace}
\providecommand{\MR}{\relax\ifhmode\unskip\space\fi MR }
% \MRhref is called by the amsart/book/proc definition of \MR.
\providecommand{\MRhref}[2]{%
  \href{http://www.ams.org/mathscinet-getitem?mr=#1}{#2}
}
\providecommand{\href}[2]{#2}

\end{document}